\numberwithin{equation}{section}
\newtheorem{theorem}{Theorem}[section]
\newtheorem{defi}[theorem]{Definition}
\newtheorem{remark}[theorem]{Remark}
\begin{document}

\title[Discrete Heat equation with irregular thermal conductivity]
{Discrete Heat Equation  with irregular thermal conductivity and tempered distributional data}
\author[M. Chatzakou]{Marianna Chatzakou}
\address{
	Marianna Chatzakou:
	\endgraf
	Department of Mathematics: Analysis, Logic and Discrete Mathematics
	\endgraf
	Ghent University
	\endgraf
	Ghent
	\endgraf
	Belgium
	\endgraf
	{\it E-mail address} {\rm marianna.chatzakou@ugent.be}
}
\author[A. Dasgupta]{Aparajita Dasgupta}
\address{
	Aparajita Dasgupta:
	\endgraf
	Department of Mathematics
	\endgraf
	Indian Institute of Technology, Delhi, Hauz Khas
	\endgraf
	New Delhi-110016 
	\endgraf
	India
	\endgraf
	{\it E-mail address} {\rm adasgupta@maths.iitd.ac.in}
}

\author[M. Ruzhansky]{Michael Ruzhansky}
\address{
	Michael Ruzhansky:
	\endgraf
	Department of Mathematics: Analysis, Logic and Discrete Mathematics
	\endgraf
	Ghent University, Belgium
	\endgraf
	and
	\endgraf
	School of Mathematical Sciences
	\endgraf
	Queen Mary University of London
	\endgraf
	United Kingdom
	\endgraf
	{\it E-mail address} {\rm ruzhansky@gmail.com}
}
\author[A. Tushir]{Abhilash Tushir}
\address{
	Abhilash Tushir:
	\endgraf
	Department of Mathematics
	\endgraf
	Indian Institute of Technology, Delhi, Hauz Khas
	\endgraf
	New Delhi-110016 
	\endgraf
	India
	\endgraf
	{\it E-mail address} {\rm abhilash2296@gmail.com}
}
\thanks{M. Chatzakou is a postdoctoral fellow of the Research Foundation – Flanders (FWO) under the postdoctoral grant No 12B1223N. A. Dasgupta is supported by Core Research Grant, RP03890G,  Science and Engineering Research Board (SERB), DST,  India. M. Ruzhansky is supported by the EPSRC Grants 
EP/R003025 by the FWO Odysseus 1 grant G.0H94.18N: Analysis and Partial Differential Equations and by the  Methusalem programme of the Ghent University Special Research Fund (BOF) (Grantnumber
01M01021). The last author is supported by the institute assistantship from Indian Institute of Technology Delhi, India. }
\date{\today}
\begin{abstract}
In this paper, we consider a semi-classical version of the nonhomogeneous heat equation with singular time-dependent coefficients on the lattice $\hbar \mathbb{Z}^n$. We establish the well-posedeness of such Cauchy equations in the classical sense when regular coefficients are considered, and analyse how the notion of very weak solution adapts in such equations when distributional coefficients are regarded. We prove the well-posedness of both the classical and the very weak solution in the weighted spaces $\ell^{2}_{s}(\hbar \mathbb{Z}^n)$, $s \in \mathbb{R}$, which is enough to prove the well-posedness in the space of tempered distributions $\mathcal{S}'(\hbar \mathbb{Z}^n)$. Notably, when $s=0$, we show that for $\hbar \rightarrow 0$, the classical (resp. very weak) solution of the heat equation in the Euclidean setting $\mathbb{R}^n$ is recaptured by the  classical (resp. very weak) solution of it in the semi-classical setting $\hbar \mathbb{Z}^n$.
\end{abstract}
\maketitle
\tableofcontents
\section{Introduction}\label{intro}
The solvability of the heat equation, as well as the inverse problem for the heat equation where thermal coefficients are constant, time-dependent, or space-dependent, has been studied extensively in many different settings. 

For the case of the heat equation with singular potentials in the space variable, we refer to the works \cite{marianna_heat2022,MR4207957} for an overview of the known results in different settings. For the case where the time-dependent coefficients are considered, we refer to the works \cite{archil2002,MR3166532,MR4469006,MR3610706,MR4379761} and \cite{MR3320161}, that apply in settings different from the lattice $\mathbb{Z}^n$. In the particular case of the lattice $\mathbb{Z}^n$ we have the work \cite{FNN95} where constant coefficients are considered. In the aforesaid setting, the heat equation with constant coefficients reads as the parabolic Anderson model, see e.g. \cite{MR4076766} and references therein, and the asymptotic analysis of its solution is a topic of wide interest in the field of stochastic analysis.

The setting in this current work is, for a (small) semi-classical parameter $\hbar>0$,  given by
$$\hbar\mathbb{Z}^{n}=\{x\in \mathbb{R}^{n}: x=\hbar k, ~ k\in\mathbb{Z}^n\},$$ and clearly includes the $n$-dimensional lattice $\mathbb{Z}^n$ as a special case. For $\alpha>0$, the discrete fractional Laplacian on $\hbar\mathbb{Z}^{n}$  denoted by $\left(-\mathcal{L}_{\hbar}\right)^{\alpha}$ is defined by
\begin{equation}
    \left(-\mathcal{L}_{\hbar}\right)^{\alpha} u(k):=\sum_{j \in \mathbb{Z}^n} a_j^{(\alpha)} u(k+j \hbar), \quad k\in\hbar\mathbb{Z}^{n},
\end{equation}
where the expansion coefficient $a_{j}^{(\alpha)}$ is given by
\begin{equation}\label{ajformula}
a_j^{(\alpha)}:=\int\limits_{\left[-\frac{1}{2},\frac{1}{2}\right]^{n}}\left[\sum_{l=1}^n 4 \sin ^2\left(\pi \xi_{l}\right)\right]^\alpha e^{-2 \pi i j \cdot \xi} \mathrm{d} \xi.
\end{equation}
For more information on the discrete fractional Laplacian, see Section \ref{sec:pre}.

For the space variable $k \in \hbar \mathbb{Z}^n $ in this setting, we analyse the semi-classical analogue of the heat equation in the Euclidean setting, which reads as follows:
\begin{equation}\label{heatpde}
	\left\{\begin{array}{l}
		\partial_{t}u(t, k)+a(t)\hbar^{-2\alpha}\left(-\mathcal{L}_{\hbar}\right)^{\alpha} u(t, k)+b(t) u(t, k)=f(t,k), \quad (t,k) \in(0, T]\times\hbar\mathbb{Z}^{n}, \\
		u(0, k)=u_{0}(k), \quad k \in \hbar \mathbb{Z}^{n}, 
	\end{array}\right.
\end{equation}
where $a=a(t)\geq 0$ is the thermal conductivity, $b$ is
  a real-valued bounded potential    and $f$  is the heat source. 

 The first aim of the current work is to prove the well-posedness of the classical solution of the heat equation as in \eqref{heatpde} in the semi-classical setting $\hbar \mathbb{Z}^n$. Let us note that in \cite{Wave-JDE,Klein-Arxiv} the authors examine the discrete wave equation with time-dependent coefficients and the discrete Klein-Gordan equation with regular potential and prove that they are well-posed in $\ell^{2}(\hbar\mathbb{Z}^{n})$. In this work, we are extending the well-posedness results in \cite{Wave-JDE,Klein-Arxiv} for our consideration of the Cauchy problem by allowing the initial data  to grow polynomially.  More precisely,  we investigate the well-posedness of the  Cauchy problem \eqref{heatpde} with regular/irregular coefficients as well as the heat source and initial Cauchy data from the space of discrete tempered distributions $\mathcal{S}^{\prime}(\hbar\mathbb{Z}^{n})$.
  A special feature in our approach in this article is that we also allow the coefficients $a,b$ to be distributions. Particularly, our analysis allows $a,b$ to have distributional irregularities; i.e., to have $\delta$-type terms, while also Heaviside discontinuities e.g. we can take $b(t)=\delta+H(t)$. Taking into account that the solution $u(t,x)$ might as well have singularities in $t$, this consideration would lead to foundational mathematical difficulties due to the problem of the impossibility of multiplying distributions, see Schwartz \cite{MR64324}. To overcome this problem we employ the theory of very weak solution as introduced in  \cite{GRweak} which allows us to recapture the classical/distributional solution to the Cauchy problem \eqref{heatpde}, provided that the latter exists.

  With the well-posedness of the classical (resp. very weak) solution to the Cauchy problem \eqref{heatpde} on the lattice $\hbar \mathbb{Z}^n$ at our disposal, two natural questions arise:
  \begin{enumerate}
      \item Can we recapture the classical solution to the heat equation \eqref{heatpde} when the space variable lies in the Euclidean space $\mathbb{R}^n$ by allowing $\hbar \rightarrow 0$?
      \item Can we recapture the very weak solution to \eqref{heatpde} when the space variable lies in the Euclidean space $\mathbb{R}^n$ by allowing $\hbar \rightarrow 0$?
  \end{enumerate}

We will see that both questions are answered in the affirmative, provided additional Sobolev regularity. In particular, we consider the semi-classical limits $\hbar\to 0$ for the following cases:
\begin{enumerate}[label=(\Alph*)]
    \item\label{c1}regular coefficient and Sobolev initial data;
    \item\label{c2} distributional coefficient and Sobolev initial data,
\end{enumerate}
and prove that in both cases we recover the (classical/very weak) solution in the Euclidean setting. The idea of ensuring globally convergence for the solution by adding Sobolev regularity, can be found
in the semi-classical limit theorems in \cite[Theorem 1.2]{Wave-JDE} and \cite[Theorem 1.3]{Klein-Arxiv}.

Note that the Cauchy problem  \eqref{heatpde} is  the discrete analogue of heat equation with time-dependent coefficients in the Euclidean setting $\mathbb{R}^n$ given by: 
\begin{equation}\label{heatpdeEuc}
	\left\{\begin{array}{l}
		\partial_{t}u(t, x)+a(t)(-\mathcal{L})^{\alpha} u(t, x)+b(t) u(t, x)=f(t,x), \quad  (t,x) \in(0, T]\times\mathbb{R}^{n}, \\
		u(0, x)=u_{0}(x), \quad x \in  \mathbb{R}^{n}, 
	\end{array}\right.
\end{equation}
where $(-\mathcal{L})^{\alpha}$ is the usual fractional Laplacian $\left(-\mathcal{L}\right)^{\alpha}$ on $\mathbb{R}^{n}$  defined as a pseudo-differential operator with symbol $|2\pi\xi|^{2\alpha}=\left[\sum\limits_{l=1}^{n}(2\pi \xi_{l})^{2}\right]^{\alpha}$ i.e.,
\begin{equation}\label{frlap}
	(-\mathcal{L})^{\alpha} u(x)=\int\limits_{\mathbb{R}^{n}}\left|2\pi\xi\right|^{2\alpha}\widehat{u}(\xi)e^{2\pi ix\cdot\xi}\mathrm{d}\xi.
\end{equation} 

Theorems \ref{Eucheatwellpo} and \ref{Eucheatwelvws} below state the well-posedness results of the Cauchy problem \eqref{heatpdeEuc} for the above cases in the Euclidean setting $\mathbb{R}^n$.

Before presenting the aforementioned results, let us note that, notation-wise, we write $a\in L_{m}^{\infty}([0,T])$, if $a\in L^{\infty}([0,T])$ is $m$-times weakly differentiable with $\partial^{j}_{t}a\in L^{\infty}([0,T])$, for all $j=1,\dots,m$. Let us recall the usual Sobolev space $H^{m}(\mathbb{R}^{n})$ and its dual $L^{2}_{m}(\mathbb{R}^{n})$ defined as
 \begin{equation*}
     f\in H^{m}(\mathbb{R}^{n})\iff (I-\mathcal{L})^{m/2}f\in L^{2}(\mathbb{R}^{n}),
 \end{equation*}
 and
 \begin{equation}
     g\in L^{2}_{m}(\mathbb{R}^{n})\iff (1+|\xi|^{2})^{m/2}g\in L^{2}(\mathbb{R}^{n}),
 \end{equation}
 respectively.

 In the sequel, we will be writing $A \lesssim B$ whenever there exists a constant $C$, independent of the appearing parameters, such that $A \leq C B$.
 
For the well-posedness of the Cauchy problem \eqref{heatpdeEuc} in the case \ref{c1} we have the following result:
\begin{theorem}\label{Eucheatwellpo}
    Let $m\in \mathbb{R}$ and $f\in L^{2}([0,T];H^{m}(\mathbb{R}^{n}))$. Assume that $a \in L_1^{\infty}([0, T])$ satisfies $\inf\limits_{t \in[0, T]} a(t)=a_0>0$ and $b \in L^{\infty}([0, T])$. If the initial Cauchy data $u_{0} \in H^{m}(\mathbb{R}^{n})$, then the Cauchy problem \eqref{heatpdeEuc} has a unique solution   $u \in C([0,T];H^{m}(\mathbb{R}^{n})) $ which satisfies the estimate
 \begin{equation}
    \|u(t,
    \cdot)\|_{H^{m}(\mathbb{R}^{n})}^{2}\leq C_{T,a,b}\left(\|u_{0}\|^{2}_{H^{m}(\mathbb{R}^{n})}+\|f\|^{2}_{L^{2}([0,T];H^{m}(\mathbb{R}^{n})}\right),
\end{equation}
for all $t\in[0,T]$, where the positive constant $C_{T,a,b}$ is given by
\begin{equation}\label{constantthm}
  C_{T,a,b}=a_{0}^{-1}\|a\|_{L^{\infty}}e^{a_{0}^{-1}(\|a_{t}\|_{L^{\infty}}+2\|a\|_{L^{\infty}}\|b\|_{L^{\infty}}+\|a\|_{L^{\infty}})T}.
\end{equation}
\end{theorem}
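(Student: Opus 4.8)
The plan is to work entirely on the Fourier side, where the fractional Laplacian becomes multiplication by $|2\pi\xi|^{2\alpha}$, and to run a weighted energy estimate. Applying the Fourier transform in $x$ to \eqref{heatpdeEuc} turns the PDE into the family of ODEs
\begin{equation*}
	\partial_t \widehat{u}(t,\xi) + a(t)|2\pi\xi|^{2\alpha}\widehat{u}(t,\xi) + b(t)\widehat{u}(t,\xi) = \widehat{f}(t,\xi),
\end{equation*}
parametrised by $\xi\in\mathbb{R}^n$, with data $\widehat{u}(0,\xi)=\widehat{u_0}(\xi)$. The natural energy is the weighted quantity
\begin{equation*}
	E(t,\xi) := \langle\xi\rangle^{2m}\bigl(|\widehat{u}(t,\xi)|^2 + |2\pi\xi|^{2\alpha}|\widehat{u}(t,\xi)|^2\bigr),
\end{equation*}
or, since a multiplier like $|2\pi\xi|^{2\alpha}$ alone is not coercive near $\xi=0$, perhaps it is cleaner to use $E(t,\xi):=\langle\xi\rangle^{2m}|\widehat{u}(t,\xi)|^2$ directly and exploit that $a(t)|2\pi\xi|^{2\alpha}\ge 0$ gives a good (dissipative) sign. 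First I would differentiate $E$ in $t$, substitute the ODE, and observe that the dissipative term $-2a(t)|2\pi\xi|^{2\alpha}\langle\xi\rangle^{2m}|\widehat u|^2$ has the right sign and can simply be dropped; the potential term contributes at most $2\|b\|_{L^\infty}E$, and the source term is handled by Cauchy--Schwarz/Young as $2|\widehat f|\,|\widehat u|\langle\xi\rangle^{2m}\le E + \langle\xi\rangle^{2m}|\widehat f|^2$. This yields a differential inequality $\partial_t E(t,\xi)\le (1+2\|b\|_{L^\infty})E(t,\xi)+\langle\xi\rangle^{2m}|\widehat f(t,\xi)|^2$, and Grönwall's inequality in $t$ followed by integration in $\xi$ gives an $H^m$ bound — but with a constant that does not yet match \eqref{constantthm}.

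To obtain precisely the stated constant $C_{T,a,b}$ with its $a_0^{-1}\|a\|_{L^\infty}$ prefactor and the $a_0^{-1}(\|a_t\|_{L^\infty}+2\|a\|_{L^\infty}\|b\|_{L^\infty}+\|a\|_{L^\infty})$ exponent, the right move is the standard trick of dividing by $a(t)$: set instead $\mathcal{E}(t,\xi):=\frac{\langle\xi\rangle^{2m}}{a(t)}|\widehat u(t,\xi)|^2$ (this is where $a_0>0$ and $a\in L_1^\infty$ enter — we need $1/a$ bounded and $a$ weakly differentiable so that $\partial_t(1/a)=-a_t/a^2$ makes sense as an $L^\infty$ function). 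Differentiating, the main dissipative term becomes $-2|2\pi\xi|^{2\alpha}\langle\xi\rangle^{2m}|\widehat u|^2\le 0$ and is discarded; the term coming from $\partial_t(1/a)$ is bounded by $\|a_t\|_{L^\infty}a_0^{-1}\mathcal{E}$; the $b$-term gives $2\|b\|_{L^\infty}\mathcal{E}$, hmm, one needs to be a touch careful here and it will actually produce $2\|a\|_{L^\infty}\|b\|_{L^\infty}a_0^{-1}\mathcal{E}$ once weights of $a$ are tracked; and Young's inequality on the source term produces the $\|a\|_{L^\infty}a_0^{-1}$ contribution in the exponent plus a clean $\int_0^T\langle\xi\rangle^{2m}|\widehat f(s,\xi)|^2\,ds$ forcing term. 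Grönwall then gives $\mathcal{E}(t,\xi)\le e^{a_0^{-1}(\|a_t\|_{L^\infty}+2\|a\|_{L^\infty}\|b\|_{L^\infty}+\|a\|_{L^\infty})t}\bigl(\mathcal{E}(0,\xi)+\int_0^t\langle\xi\rangle^{2m}|\widehat f|^2\bigr)$, and multiplying back by $a(t)\le\|a\|_{L^\infty}$ and using $\mathcal{E}(0,\xi)=a(0)^{-1}\langle\xi\rangle^{2m}|\widehat{u_0}|^2\le a_0^{-1}\langle\xi\rangle^{2m}|\widehat{u_0}|^2$, then integrating in $\xi$ via Plancherel, yields exactly the claimed estimate with $C_{T,a,b}$ as in \eqref{constantthm}.

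For existence and uniqueness I would argue as follows. Uniqueness is immediate from the a priori estimate: the difference of two solutions solves the homogeneous equation with zero data, so its $H^m$ norm vanishes. For existence, since the equation is linear with the spatial operator diagonalised by the Fourier transform, I would solve the scalar ODEs explicitly: for each fixed $\xi$, $\widehat u(t,\xi) = e^{-\int_0^t (a(s)|2\pi\xi|^{2\alpha}+b(s))\,ds}\widehat{u_0}(\xi) + \int_0^t e^{-\int_\tau^t(a(s)|2\pi\xi|^{2\alpha}+b(s))\,ds}\widehat f(\tau,\xi)\,d\tau$, which is well-defined since $a,b\in L^\infty$; the a priori bound shows this lies in $L^2_\xi(\langle\xi\rangle^{2m})$ for each $t$, i.e. $u(t,\cdot)\in H^m$, and continuity in $t$ with values in $H^m$ follows from dominated convergence together with the energy estimate. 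The main obstacle is not any single deep step but the bookkeeping in the energy argument: choosing the correct weighted energy ($\mathcal{E}=\langle\xi\rangle^{2m}|\widehat u|^2/a$ rather than the naive one) and carefully tracking where each of the three terms in the exponent of \eqref{constantthm} comes from, so that the constant matches on the nose; everything else is routine once the diagonalisation is in place.
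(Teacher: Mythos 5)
Your proposal is correct and follows essentially the same route as the paper: Fourier diagonalisation of the fractional Laplacian, an $a$-weighted energy functional whose equivalence constants produce the $a_0^{-1}\|a\|_{L^\infty}$ prefactor, dropping the nonnegative dissipative term, and Gr\"onwall; the paper uses the weight $a(t)$ (i.e.\ $E=(a\widehat u,\widehat u)$) where you use $1/a(t)$, but this is immaterial and both yield the constant \eqref{constantthm}. Your explicit Duhamel formula for existence and the continuity argument are a slightly more complete version of what the paper leaves implicit.
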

Similarly, for the well-posedness of the Cauchy problem \eqref{heatpdeEuc} in the case \ref{c2} we have:
\begin{theorem}\label{Eucheatwelvws}
     Let a and $b$ be distributions with supports included in $[0, T]$ such that $a \geq a_0>0$ for some positive constant $a_0$, and let the source term $f(\cdot, x)$ be a  distribution with support included in $[0, T]$, for all $x\in\mathbb{R}^{n}$. Let $m \in \mathbb{R}$ and $u_{0} \in H^{m}(\mathbb{R}^{n})$. Then the Cauchy problem \eqref{heatpdeEuc} has a unique very weak solution $(u_{\varepsilon})_{\varepsilon}\in L^{2}([0,T];H^{m}(\mathbb{R}^{n}))^{(0,1]}$ of order $m$.
\end{theorem}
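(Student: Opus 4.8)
The plan is to follow the standard "very weak solution" machinery of \cite{GRweak}: regularise the distributional coefficients, apply the classical well-posedness of Theorem \ref{Eucheatwellpo} to the regularised problem, and then verify the two defining properties of a very weak solution, namely existence of a moderate net of solutions and uniqueness (in the sense that the net is negligible whenever the data are negligible and the coefficients are negligibly perturbed). First I would fix a Friedrichs mollifier $\psi\in C_c^\infty(\mathbb{R})$ with $\int\psi=1$ and set $\psi_\varepsilon(t)=\varepsilon^{-1}\psi(t/\varepsilon)$, and regularise $a_\varepsilon=a*\psi_{\omega(\varepsilon)}$, $b_\varepsilon=b*\psi_{\omega(\varepsilon)}$, $f_\varepsilon(\cdot,x)=f(\cdot,x)*\psi_{\omega(\varepsilon)}$ for a suitable scaling $\omega(\varepsilon)\to 0$ (e.g. $\omega(\varepsilon)=\varepsilon$ or a slower rate $\varepsilon^{|\log\varepsilon|}$-type choice as in \cite{GRweak}); since $a$ is a distribution supported in $[0,T]$ with $a\ge a_0$, one checks that $a_\varepsilon\in L^\infty_1([0,T_\varepsilon])$ (on a slightly enlarged interval, which is harmless) with $\inf a_\varepsilon\ge a_0>0$, and $b_\varepsilon\in L^\infty$, while by the structure theorem for compactly supported distributions the $L^\infty$ norms of $a_\varepsilon,\partial_t a_\varepsilon, b_\varepsilon$ grow at most polynomially in $1/\varepsilon$. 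Likewise $f_\varepsilon\in L^2([0,T];H^m(\mathbb{R}^n))$ with norm of at most polynomial growth in $1/\varepsilon$.

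Next I would invoke Theorem \ref{Eucheatwellpo} for each fixed $\varepsilon\in(0,1]$: since the regularised coefficients satisfy all its hypotheses, the Cauchy problem
\begin{equation*}
\partial_t u_\varepsilon + a_\varepsilon(t)(-\mathcal{L})^\alpha u_\varepsilon + b_\varepsilon(t) u_\varepsilon = f_\varepsilon,\qquad u_\varepsilon(0,\cdot)=u_0,
\end{equation*}
has a unique solution $u_\varepsilon\in C([0,T];H^m(\mathbb{R}^n))$ with
\begin{equation*}
\|u_\varepsilon(t,\cdot)\|_{H^m}^2\le C_{T,a_\varepsilon,b_\varepsilon}\left(\|u_0\|_{H^m}^2+\|f_\varepsilon\|_{L^2([0,T];H^m)}^2\right).
\end{equation*}
Because $C_{T,a_\varepsilon,b_\varepsilon}$ in \eqref{constantthm} depends on the regularised coefficients only through their $L^\infty$ norms (and $\|\partial_t a_\varepsilon\|_{L^\infty}$) and $a_0$, the polynomial-in-$1/\varepsilon$ bounds from the previous step show $\sup_{t\in[0,T]}\|u_\varepsilon(t,\cdot)\|_{H^m}\lesssim \varepsilon^{-N}$ for some $N\ge 0$; hence $(u_\varepsilon)_\varepsilon$ is a moderate net in $L^2([0,T];H^m(\mathbb{R}^n))$, which is exactly the existence part of the definition of a very weak solution of order $m$. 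For uniqueness, I would take two regularising families with negligible differences of data and of coefficients, form the difference $v_\varepsilon=u_\varepsilon-\tilde u_\varepsilon$, which solves a Cauchy problem with zero-ish initial data, a negligible right-hand side, and coefficients differing negligibly; running the same energy estimate on $v_\varepsilon$ — grouping the coefficient-difference terms as additional (negligible) source terms, controlled via Grönwall with the moderate bounds already established — yields $\|v_\varepsilon\|_{C([0,T];H^m)}=O(\varepsilon^\infty)$, i.e. negligibility.

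The energy estimate itself is the reusable technical core and is identical in spirit to the one underlying Theorem \ref{Eucheatwellpo}: apply $(I-\mathcal{L})^{m/2}$, pair with $u_\varepsilon$ in $L^2(\mathbb{R}^n)$, use $a_\varepsilon\ge a_0>0$ and positivity of $(-\mathcal{L})^\alpha$ to absorb the good term, bound the $b_\varepsilon$ term and the $\partial_t a_\varepsilon$ term that appears after an integration by parts / weighting trick, and close with Grönwall. The main obstacle I anticipate is bookkeeping rather than conceptual: one must make sure the mollification is done consistently so that the enlarged time interval $[0,T_\varepsilon]$ and the supports cause no trouble, and — more importantly — track carefully that every constant entering Grönwall depends on $\varepsilon$ only polynomially, so that moderateness is genuinely preserved and, in the uniqueness step, negligibility of the data truly forces negligibility of $v_\varepsilon$ despite the coefficients blowing up polynomially. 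Once these growth bookkeeping points are handled, uniqueness and moderateness follow, completing the proof.
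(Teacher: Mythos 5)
Your overall strategy coincides with the paper's: the paper proves this theorem by deferring to the proofs of Theorems \ref{ext} and \ref{uniq} (regularise the coefficients, apply the classical well-posedness theorem to the $\varepsilon$-parametrised problem, control the regularised coefficients via the structure theorem for compactly supported distributions, conclude moderateness; for uniqueness, run the same energy estimate on the difference of two nets with the coefficient discrepancies absorbed into a negligible source), replacing the $H^{s}(\mathbb{T}^{n}_{\hbar})$ inner product by that of $L^{2}_{m}(\mathbb{R}^{n})$. However, one step in your write-up fails as stated, and it is precisely the step where the paper's proof has actual content. You assert that because $\|a_{\varepsilon}\|_{L^{\infty}}$, $\|\partial_{t}a_{\varepsilon}\|_{L^{\infty}}$ and $\|b_{\varepsilon}\|_{L^{\infty}}$ grow at most polynomially in $1/\varepsilon$, the constant $C_{T,a_{\varepsilon},b_{\varepsilon}}$ of \eqref{constantthm} is polynomially bounded and hence $(u_{\varepsilon})_{\varepsilon}$ is moderate. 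But $C_{T,a,b}$ depends \emph{exponentially} on $\|a_{t}\|_{L^{\infty}}$ and $\|a\|_{L^{\infty}}\|b\|_{L^{\infty}}$: with your default choice $\omega(\varepsilon)=\varepsilon$, the structure theorem gives $\|\partial_{t}a_{\varepsilon}\|_{L^{\infty}}\lesssim\varepsilon^{-L_{1}-1}$, whence $C_{T,a_{\varepsilon},b_{\varepsilon}}\sim e^{c\,\varepsilon^{-L_{1}-1}}$, which is not $O(\varepsilon^{-N})$ for any $N$, and moderateness does not follow. This is exactly why the paper mollifies at the scale $\omega(\varepsilon)\sim|\log\varepsilon|^{-1}$, so that the negative powers of $\omega(\varepsilon)$ entering the exponent are only logarithmic in $1/\varepsilon$ and the Gronwall factor is tamed to a power of $\varepsilon^{-1}$; your alternative suggestion $\omega(\varepsilon)\sim\varepsilon^{|\log\varepsilon|}$ goes in the wrong direction, since it tends to zero \emph{faster} than $\varepsilon$ and makes the blow-up worse. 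You flag the polynomial dependence of the Gronwall constants as ``bookkeeping'', but it is not automatic: without the logarithmic mollification scale the existence half of the argument collapses, and the same scale is needed again in the uniqueness step.

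A secondary point on uniqueness, which you describe only in outline: the negligible source term for the difference net contains $(\tilde{a}_{\varepsilon}-a_{\varepsilon})(-\mathcal{L})^{\alpha}\tilde{u}_{\varepsilon}$. In the lattice setting of Theorem \ref{uniq} the operator $\hbar^{-2\alpha}(-\mathcal{L}_{\hbar})^{\alpha}$ is bounded on $\ell^{2}_{s}(\hbar\mathbb{Z}^{n})$ for fixed $\hbar$, so moderateness of $\tilde{u}_{\varepsilon}$ immediately gives negligibility of this term. On $\mathbb{R}^{n}$ the operator $(-\mathcal{L})^{\alpha}$ is unbounded on $H^{m}(\mathbb{R}^{n})$, so to conclude negligibility in $L^{2}([0,T];H^{m}(\mathbb{R}^{n}))$ you need moderateness of $\tilde{u}_{\varepsilon}$ in $H^{m+2\alpha}$ (or you must measure negligibility in $H^{m-2\alpha}$). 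This requires an explicit extra argument (or an adjustment of the order in which uniqueness is formulated) that neither your sketch nor the paper's remark spells out.
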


For more details about  the very weak solutions for the Cauchy problem \eqref{heatpdeEuc} in the Euclidean setting, we refer to Section \ref{sec:remark}.

To give a synopsis of the topic of very weak solution, we refer to the works \cite{MR3912648,NR:wveqn} on Cauchy problems with singular, time-dependent coefficients in the Euclidean setting. The concept of very weak solution with space-dependent coefficients in the general setting of graded Lie groups has been employed in the works \cite{marianna_schro12022,marianna_schro22022,marianna_heat2022,MR4211066,MR4046183} and \cite{MR4237945} in the Euclidean setting.


 To summarise, we are able to: use the notion of very weak solution, as introduced in \cite{GRweak} in the setting of hyperbolic Cauchy problems with distributional coefficients in space; understand distributionally the Cauchy problem \eqref{heatpde}; and prove its well-posedness.  Precisely, the following facts will be presented in the sequel:
 \begin{itemize}
     \item The Cauchy problem \eqref{heatpde} is well-posed in the weighted spaces $\ell^{2}_{s}(\hbar \mathbb{Z}^n)$, for all $s \in \mathbb{R}$ and admits a very weak solution even when distributional coefficients $a,b$ are considered.
     \item The very weak solution is unique in the sense of Definition \ref{uniquedef}.
     \item If the  coefficients $a,b$ are regular enough so that the Cauchy problem \eqref{heatpde} has a classical solution, then the very weak solution recaptures the classical one. This fact indicates that the notion of very weak solution as adapted to our setting is consistent with the classical one.
     \item We are able to approximate both the classical and the very weak solution to the heat equation as in \eqref{heatpde} in the Euclidean setting $\mathbb{R}^n$, by the corresponding solution in the semi-classical setting $\hbar\mathbb{Z}^n$.
 \end{itemize}
The structure of the paper is as follows: in Section \ref{sec:main} we present our main results. In Section \ref{sec:pre} we discuss the basics of the Fourier analysis in the case of the lattices $\mathbb{Z}^n$ and $\hbar \mathbb{Z}^n$ and of the torus $\mathbb{T}^{n}_{\hbar}$. Additionally to this, the necessary for our analysis functions of spaces and distributions are recalled. In Section \ref{sec:proofs} we provide the proofs of the results on the existence and uniqueness of the very weak solution of the problem we consider, as well as the consistency of it with the classical solution. In Section \ref{sec:limit} we prove the results on the approximation of the (classical/very weak) solution on $\mathbb{R}^n$, by the one on $\hbar \mathbb{Z}^n$. Finally, in Section \ref{sec:remark}, we make some remarks about the very weak solution in the Euclidean setting.

\section{Main results}\label{sec:main}
In this section, we will present our main results for the well-posedness of the Cauchy problem \eqref{heatpde} with regular/irregular coefficients and the discrete tempered distributional initial data. We will also present the semi-classical limit theorems for the classical as well as very weak solution.

 First, we consider the Cauchy problem \eqref{heatpde} with discrete tempered distributional initial Cauchy data  and the regular coefficients $a \in L_1^{\infty}([0, T])$ and $b \in L^{\infty}([0, T])$. In the light of the  relation \eqref{distribution}, proving the well-posedness in the space of weighted spaces $\ell^{2}_{s}(\hbar\mathbb{Z}^{n})$ is sufficient to prove  the well-posedness in the space of discrete tempered distributions $\mathcal{S}^{\prime}(\hbar\mathbb{Z}^{n})$. 
 
 Let us start our exposition of results with the one on the well-posedness theorem (in the classical sense) for the Cauchy problem \eqref{heatpde}:
\begin{theorem}[Classical solution]\label{heatwellpo}
  Let $s \in \mathbb{R}$ and $f \in L^{2}([0, T] ; \ell^{2}_{s}(\hbar\mathbb{Z}^{n}))$. Assume that $a \in L_1^{\infty}([0, T])$ satisfies $\inf\limits_{t \in[0, T]} a(t)=a_0>0$ and $b \in L^{\infty}([0, T])$. If for the initial Cauchy data we have $u_{0} \in \ell^{2}_{s}\left(\hbar\mathbb{\mathbb{Z}}^{n}\right)$, then the Cauchy problem \eqref{heatpde} has a unique solution  $u \in C([0,T];\ell^{2}_{s}(\hbar\mathbb{Z}^{n}))$ satisfying the estimate
 \begin{equation}\label{wellpoest}
    \|u(t,
    \cdot)\|_{\ell^{2}_{s}(\hbar\mathbb{Z}^{n})}^{2}\leq C_{T,a,b}(\|u_{0}\|^{2}_{\ell^{2}_{s}(\hbar\mathbb{Z}^{n})}+\|f\|^{2}_{L^{2}([0,T];\ell^{2}_{s}(\hbar\mathbb{Z}^{n}))}),
\end{equation}
for all $t\in[0,T]$ and $\hbar>0$, where the positive constant $C_{T,a,b}$ is given by
\begin{equation}\label{constantthm}
  C_{T,a,b}=a_{0}^{-1}\|a\|_{L^{\infty}}e^{a_{0}^{-1}(\|a_{t}\|_{L^{\infty}}+2\|a\|_{L^{\infty}}\|b\|_{L^{\infty}}+\|a\|_{L^{\infty}})T}.
\end{equation}
\end{theorem}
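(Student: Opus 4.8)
The plan is to diagonalise the spatial operator by the semi-classical discrete Fourier transform, solve the resulting scalar Cauchy problems explicitly (which gives existence, uniqueness and continuity in $t$ for every fixed $\hbar$), and then read the quantitative bound \eqref{wellpoest} off an energy identity engineered to reproduce the constant \eqref{constantthm} exactly; the one delicate ingredient is the interaction of the weight defining $\ell^{2}_{s}$ with the fractional Laplacian.

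\emph{Reduction and existence.} Applying the semi-classical discrete Fourier transform on $\hbar\mathbb{Z}^{n}$ (Section \ref{sec:pre}), which turns $\hbar^{-2\alpha}(-\mathcal{L}_{\hbar})^{\alpha}$ into multiplication by the non-negative symbol
\[
\sigma_{\hbar}(\xi):=\hbar^{-2\alpha}\Big[\sum_{l=1}^{n}4\sin^{2}(\pi\hbar\xi_{l})\Big]^{\alpha}\geq 0
\]
on the dual torus, the problem \eqref{heatpde} becomes, for each frequency $\xi$, the scalar linear ODE $\partial_{t}\widehat{u}=-(a(t)\sigma_{\hbar}(\xi)+b(t))\widehat{u}+\widehat{f}$ with datum $\widehat{u_{0}}(\xi)$. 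Since the coefficients are bounded in $t$, the integrating-factor formula
\[
\widehat{u}(t,\xi)=e^{-\int_{0}^{t}(a\sigma_{\hbar}(\xi)+b)}\,\widehat{u_{0}}(\xi)+\int_{0}^{t}e^{-\int_{\tau}^{t}(a\sigma_{\hbar}(\xi)+b)}\,\widehat{f}(\tau,\xi)\,\mathrm{d}\tau
\]
produces the unique solution, and because $\sigma_{\hbar}\geq 0$ every exponential factor is bounded by $e^{\|b\|_{L^{\infty}}T}$; by Plancherel this already gives an element of $C([0,T];\ell^{2}_{s}(\hbar\mathbb{Z}^{n}))$ whenever $u_{0}\in\ell^{2}_{s}$ and $f\in L^{2}([0,T];\ell^{2}_{s})$, with continuity in $t$ following from dominated convergence in the representation. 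Uniqueness is immediate from linearity (the difference of two solutions has vanishing datum and source).

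\emph{The sharp estimate.} To get the precise constant \eqref{constantthm} I would run the energy method not on $\|u\|_{\ell^{2}_{s}}^{2}$ directly but on the $a$-weighted functional $E(t):=a(t)^{-1}\|u(t,\cdot)\|_{\ell^{2}_{s}(\hbar\mathbb{Z}^{n})}^{2}$, which is comparable to $\|u(t,\cdot)\|_{\ell^{2}_{s}}^{2}$ since $a_{0}\leq a\leq\|a\|_{L^{\infty}}$. Differentiating, inserting \eqref{heatpde}, and using that $\hbar^{-2\alpha}(-\mathcal{L}_{\hbar})^{\alpha}$ is self-adjoint and non-negative, one obtains
\[
E'(t)=-\frac{a'(t)}{a(t)}\,E(t)-2\big\langle \hbar^{-2\alpha}(-\mathcal{L}_{\hbar})^{\alpha}u,u\big\rangle_{\ell^{2}_{s}}-\frac{2b(t)}{a(t)}\|u\|_{\ell^{2}_{s}}^{2}+\frac{2}{a(t)}\,\mathrm{Re}\,\langle f,u\rangle_{\ell^{2}_{s}}.
\]
The crucial point is that the second term is $\leq 0$ and is simply discarded: the large, $\hbar^{-2\alpha}$-sized dissipative contribution is never quantified, which is exactly why the final constant will not depend on $\hbar$. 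Estimating $|a'|/a\leq a_{0}^{-1}\|a_{t}\|_{L^{\infty}}$, $2|b|/a\leq 2a_{0}^{-1}\|b\|_{L^{\infty}}$ together with $\|u\|_{\ell^{2}_{s}}^{2}\leq\|a\|_{L^{\infty}}E$, and $\tfrac{2}{a}\mathrm{Re}\,\langle f,u\rangle\leq a_{0}^{-1}\|f\|_{\ell^{2}_{s}}^{2}+a_{0}^{-1}\|a\|_{L^{\infty}}E$, one arrives at
\[
E'(t)\leq a_{0}^{-1}\big(\|a_{t}\|_{L^{\infty}}+2\|a\|_{L^{\infty}}\|b\|_{L^{\infty}}+\|a\|_{L^{\infty}}\big)E(t)+a_{0}^{-1}\|f(t,\cdot)\|_{\ell^{2}_{s}}^{2},
\]
and Gronwall's lemma combined with $E(0)\leq a_{0}^{-1}\|u_{0}\|_{\ell^{2}_{s}}^{2}$ and $\|u(t,\cdot)\|_{\ell^{2}_{s}}^{2}\leq\|a\|_{L^{\infty}}E(t)$ reproduces \eqref{wellpoest} with exactly the constant \eqref{constantthm}.

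\emph{Main obstacle.} The step that needs genuine care is the non-negativity $\langle \hbar^{-2\alpha}(-\mathcal{L}_{\hbar})^{\alpha}u,u\rangle_{\ell^{2}_{s}}\geq 0$ used above: it is evident for $s=0$ (Plancherel and $\sigma_{\hbar}\geq 0$), but for $s\neq 0$ one has to commute the polynomial weight $\langle\cdot\rangle^{s}$ past the convolution operator $(-\mathcal{L}_{\hbar})^{\alpha}$. I would handle the resulting commutator on $\ell^{2}$ via the decay $|a_{j}^{(\alpha)}|\lesssim\langle j\rangle^{-n-2\alpha}$ of the coefficients in \eqref{ajformula} together with the cancellation $\sum_{j\in\mathbb{Z}^{n}}a_{j}^{(\alpha)}=0$ (the symbol vanishes at the origin), so that the weighted operator differs from $\hbar^{-2\alpha}(-\mathcal{L}_{\hbar})^{\alpha}$ only by a bounded remainder which is harmlessly absorbed into the Gronwall term; this is the only place where the bookkeeping of constants, and of the $\hbar$-dependence, has to be carried out attentively. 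An equivalent route is to perform the whole argument on the dual torus, reducing the $\ell^{2}_{s}$-estimate to an $L^{2}$-estimate for the Fourier transform of $u$ with $\xi$-derivative weights and commuting these past the multiplier $\sigma_{\hbar}$.
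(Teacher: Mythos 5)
Your proposal follows essentially the same route as the paper's proof: diagonalise by the semi-classical Fourier transform, run an energy estimate with an $a$-weighted functional, discard the non-negative dissipative term coming from $\hbar^{-2\alpha}(-\mathcal{L}_{\hbar})^{\alpha}$, and close with Gronwall's lemma. Your choice $E(t)=a(t)^{-1}\|u(t,\cdot)\|^{2}_{\ell^{2}_{s}}$ versus the paper's $E(t,\xi)=(a(t)\widehat{u},\widehat{u})_{H^{s}(\mathbb{T}^{n}_{\hbar})}$ is immaterial (both are comparable to $\|u\|^{2}_{\ell^{2}_{s}}$ with the ratio $a_{0}^{-1}\|a\|_{L^{\infty}}$ and both reproduce \eqref{constantthm} exactly), and the explicit integrating-factor formula you give for existence, uniqueness and time-continuity is a welcome addition that the paper leaves implicit.

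The one point of divergence is precisely the step you flag as the main obstacle, and it is worth recording that the paper does not address it at all: in the computation of $\partial_{t}E$ the paper rewrites $-(a^{2}\nu^{2}\widehat{u},\widehat{u})$ as $-a^{2}(\nu\widehat{u},\nu\widehat{u})\leq 0$ in the inner product \eqref{innerpsh2}, which tacitly assumes that multiplication by the symbol $\nu(\xi)$ is self-adjoint on $H^{s}(\mathbb{T}^{n}_{\hbar})$ --- equivalently, that the convolution operator $(-\mathcal{L}_{\hbar})^{\alpha}$ is self-adjoint and non-negative on the weighted space $\ell^{2}_{s}(\hbar\mathbb{Z}^{n})$. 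This is clear only for $s=0$; for $s\neq 0$ the weight does not commute with the convolution, so you are right that something must be proved here. Your commutator strategy (decay of $a_{j}^{(\alpha)}$ plus vanishing of the symbol at the origin) is the right kind of repair, but as sketched it is not yet a proof: the remainder carries the prefactor $\hbar^{-2\alpha}$, the first-order gain from $|(1+|k|)^{s}-(1+|k+j\hbar|)^{s}|$ is only of size $\hbar|j|$, and the decay $|a_{j}^{(\alpha)}|\lesssim |j|^{-n-2\alpha}$ makes $\sum_{j}|a_{j}^{(\alpha)}||j|$ diverge for $\alpha\leq 1/2$, so summability and the power of $\hbar$ pull in opposite directions. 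Moreover the bound must be \emph{uniform in} $\hbar$, since \eqref{wellpoest} is asserted with an $\hbar$-independent constant and this uniformity is what drives the semi-classical limit in Theorem \ref{semlmtclass}. A symmetrised, second-order bound on the weight ratio combined with splitting the $j$-sum at $|j|\sim\hbar^{-1}$ appears to close the argument when $|s|$ is not too large relative to $\alpha$, but this step deserves to be written out in full; on this point your proposal is more careful than the paper's own proof, which silently assumes the non-negativity.
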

Next, we are considering the Cauchy problem \eqref{heatpde} with discrete tempered distributional initial
Cauchy data and, also  allowing the coefficients and the source term to have singularities in the time variable. As we discussed earlier in Section \ref{intro}, in order to deal with such equations where fundamental mathematical difficulties may arise when adapting the classical approach, we have to introduce the notion of a very weak solution. To this end, let us quickly recall the important points for the upcoming analysis: 

Let $a\in\mathcal{D}^{\prime}\left(\mathbb{R}\right)$ be a distribution.
 Using the Friedrichs-mollifier, i.e., a function $\psi \in C_{0}^{\infty}\left(\mathbb{R}\right), \psi \geq 0$ and $\int_{\mathbb{R}} \psi=1$, we are able to construct families of smooth functions $(a_{\varepsilon})_{\varepsilon}$ by regularizing the distributional coefficient $a$ as follows:
\begin{equation}\label{aeps}
	a_{\varepsilon}(t):=\left(a*\psi_{\omega(\varepsilon)}\right)\left(t\right),\quad \psi_{\omega(\varepsilon)}(t)=\dfrac{1}{\omega(\varepsilon)}\psi\left(\dfrac{t}{\omega(\varepsilon)}\right),\quad\varepsilon\in(0,1],
\end{equation}
 where $\omega(\varepsilon)\geq 0$ and $\omega(\varepsilon)\to0$ as $\varepsilon\to 0$. Let us note that since for the purposes of this article, the distributions $a$ and $b$, as in \eqref{heatpde},  are distributions with domain $[0,T]$, it is enough to consider that  $\text{supp}(\psi)\subseteq K$, with  $K=[0,T]$ throughout this article.

 The notions of moderateness and the negligibility for a net of functions/distributions  as follows:
 \begin{defi}\label{cinfm}
	(i)	A net $\left(a_{\varepsilon}\right)_{\varepsilon} \in L^{\infty}_{m}(\mathbb{R})^{(0,1]}$ is said to be $L^{\infty}_{m}$-moderate if for all  $K \Subset \mathbb{R}$, there exist $N \in \mathbb{N}_{0}$ and $c>0$ such that
	\begin{equation*}	\left\|\partial^{k} a_{\varepsilon}\right\|_{L^{\infty}(K)} \leq c \varepsilon^{-N-k},\quad \text{ for all }k=0,1,\dots,m,
	\end{equation*}
	for all $\varepsilon \in(0,1]$.\\
	(ii) 	A net $\left(a_{\varepsilon}\right)_{\varepsilon} \in L^{\infty}_{m}(\mathbb{R})^{(0,1]}$ is said to be $L^{\infty}_{m}$-negligible if for all $K \Subset \mathbb{R}$ and   $q\in\mathbb{N}_{0}$, there exists  $c>0$ such that
	\begin{equation*}
	\left\|\partial^{k} a_{\varepsilon}\right\|_{L^{\infty}(K)} \leq c \varepsilon^{q},\quad \text{ for all }k=0,1,\dots,m,
	\end{equation*}
	for all $\varepsilon \in(0,1]$.\\
		(iii)	A net $\left(u_{\varepsilon}\right)_{\varepsilon} \in L^{2}([0,T];\ell^{2}_{s}(\hbar\mathbb{Z}^{n}))^{(0,1]}$ is said to be $L^{2}([0, T] ;\ell^{2}_{s}(\hbar\mathbb{Z}^{n}))$-moderate if   there exist $N \in \mathbb{N}_{0}$ and $c>0$ such that
	\begin{equation*}
		\|u_{\varepsilon}\|_{L^{2}([0,T];\ell^{2}_{s}(\hbar\mathbb{Z}^{n}))} \leq c \varepsilon^{-N},
	\end{equation*}
	for all $\varepsilon \in(0,1]$.\\
	(iv) A net $\left(u_{\varepsilon}\right)_{\varepsilon} \in L^{2}([0, T] ;\ell^{2}_{s}(\hbar\mathbb{Z}^{n}))^{(0,1]}$ is said to be $L^{2}([0, T] ; \ell^{2}_{s}(\hbar\mathbb{Z}^{n}))$-negligible if for all $q\in\mathbb{N}_{0}$ there exists $c>0$  such that
	\begin{equation*}
\|u_{\varepsilon}\|_{L^{2}([0,T];\ell^{2}_{s}(\hbar\mathbb{Z}^{n}))} \leq c \varepsilon^{q},
	\end{equation*}
	for all  $\varepsilon \in(0,1]$.
\end{defi}

We note that the moderateness requirements are natural in the sense that distributions are moderately regularized. Moreover, by the structure theorems for distributions, we have the following inclusion:
	\begin{equation*}\label{ccsd}
		\text{compactly supported distributions } \mathcal{E}^{\prime}(\mathbb{R}) \subset\left\{L^{2}\text{-moderate families}\right\}.
	\end{equation*}
Therefore, it is possible that the Cauchy problem \eqref{heatpde} may not have a solution in the space of compactly supported distributions $\mathcal{E}^{\prime}(\mathbb{R})$  but it may exist in the space of $L^{2}$-moderate families in some suitable sense.

The notion of a very weak solution to the Cauchy problem \eqref{heatpde} can be viewed as follows:
\begin{defi}\label{vwkdef}
	Let $s \in \mathbb{R}, f \in L^{2}([0, T] ; \ell^{2}_{s}(\hbar\mathbb{Z}^{n}))$, and $u_{0} \in \ell^{2}_{s}(\hbar\mathbb{Z}^{n}) .$ The net $\left(u_{\varepsilon}\right)_{\varepsilon} \in$ $L^{2}([0, T] ; \ell^{2}_{s}(\hbar\mathbb{Z}^{n}))^{(0,1]}$ is a very weak solution of order $s$ of the Cauchy problem \eqref{heatpde} if there exist
	\begin{enumerate}
		\item $L_{1}^{\infty}$-moderate regularisation $a_{\varepsilon}$    of the coefficient $a$;
		\item $L^{\infty}$-moderate regularisation  $b_{\varepsilon}$ of the coefficient  $b$; and
		\item $L^{2}([0, T] ; \ell^{2}_{s}(\hbar\mathbb{Z}^{n}))$-moderate regularisation $f_{\varepsilon}$ of the source term $f$,
	\end{enumerate}
	such that $\left(u_{\varepsilon}\right)_{\varepsilon}$ solves the regularised problem
	\begin{equation}\label{reg}
		\left\{\begin{array}{l}
			\partial_{t} u_{\varepsilon}(t, k)+a_{\varepsilon}(t)\hbar^{-2\alpha}\left(-\mathcal{L}_{\hbar}\right)^{\alpha} u_{\varepsilon}(t, k)+b_{\varepsilon}(t) u_{\varepsilon}(t, k)=f_{\varepsilon}(t, k),\quad t\in(0, T], \\
			u_{\varepsilon}(0, k)=u_{0}(k),\quad k \in \hbar\mathbb{Z}^{n}, 
		\end{array}\right.
	\end{equation}
	for all $\varepsilon \in(0,1]$, and is $L^{2}([0, T] ; \ell^{2}_{s}(\hbar\mathbb{Z}^{n}))$-moderate.
\end{defi}
It should be noted that Theorem \ref{heatwellpo} provides a unique solution to the regularised Cauchy problem \eqref{heatpde} that satisfies estimate \eqref{wellpoest}.

A distribution $a$ is said to be a positive distribution if $\langle a, \psi\rangle \geq 0$ for all  $\psi\in C^{\infty}_{0}(\mathbb{R})$ such that $\psi\geq 0$. Similarly, a distribution $a$ is said to be a strictly positive distribution if there exists a positive  constant $\alpha$ such that $a-\alpha$ is a positive distribution in the previous sense. In other words, $a\geq\alpha>0$ in the support of $a$, where $a\geq \alpha$, means that
\begin{equation}\label{inpdis}
	\langle a-\alpha, \psi\rangle \geq 0, \quad \text{for all } \psi \in C_0^{\infty}(\mathbb{R}),\psi\geq 0.
\end{equation}
Now we can state the existence theorem  for the
Cauchy problem \eqref{heatpde} with distributional coefficients as follows:
\begin{theorem}[Existence]\label{ext}
	Let   $a$ and $b$  be  distributions with  supports contained in $[0, T]$ such that $a\geq a_{0}>0$ for some positive constant $a_{0}$, and  let the source term $f(\cdot,k)$ be a  distribution with  support contained in $[0,T]$, for all $k\in\hbar\mathbb{Z}^{n}$. For $s\in \mathbb{R}$, we assume that the initial Cauchy data $u_0$ satisfies $u_{0}\in \ell^{2}_{s}(\hbar\mathbb{Z}^{n}) $. Then the Cauchy problem \eqref{heatpde} has a very weak solution $\left(u_{\varepsilon}\right)_{\varepsilon} \in$ $L^{2}([0, T] ; \ell^{2}_{s}(\hbar\mathbb{Z}^{n}))^{(0,1]}$  of order s.
\end{theorem}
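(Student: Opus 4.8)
The plan is to follow the three-step scheme for very weak well-posedness introduced in \cite{GRweak}: (i) regularise the distributional data $a,b,f$ by convolution with a Friedrichs mollifier; (ii) apply the classical well-posedness of Theorem~\ref{heatwellpo} to the resulting smooth family to get a net $(u_{\varepsilon})_{\varepsilon}$ solving the regularised problems~\eqref{reg}; (iii) verify that $(u_{\varepsilon})_{\varepsilon}$ is $L^{2}([0,T];\ell^{2}_{s}(\hbar\mathbb{Z}^{n}))$-moderate, which together with~(ii) is exactly what Definition~\ref{vwkdef} asks for. Steps (i) and (ii) are bookkeeping; step (iii) is the real point.

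For step (i), fix $\psi$ as in~\eqref{aeps} and put $a_{\varepsilon}=a*\psi_{\omega(\varepsilon)}$, $b_{\varepsilon}=b*\psi_{\omega(\varepsilon)}$, $f_{\varepsilon}(\cdot,k)=f(\cdot,k)*\psi_{\omega(\varepsilon)}$, leaving the scale $\omega(\varepsilon)\downarrow 0$ to be fixed in step~(iii). Because $a,b$ are distributions supported in $[0,T]$, the structure theorem for compactly supported distributions writes each as a finite sum of derivatives of continuous functions, giving $\|\partial_{t}^{k}a_{\varepsilon}\|_{L^{\infty}}\lesssim\omega(\varepsilon)^{-\mu_{a}-k}$ and $\|b_{\varepsilon}\|_{L^{\infty}}\lesssim\omega(\varepsilon)^{-\mu_{b}}$, with $\mu_{a},\mu_{b}$ the orders of $a,b$; hence $(a_{\varepsilon})_{\varepsilon}$ is $L_{1}^{\infty}$-moderate, $(b_{\varepsilon})_{\varepsilon}$ is $L^{\infty}$-moderate, and $(f_{\varepsilon})_{\varepsilon}$ is $L^{2}([0,T];\ell^{2}_{s}(\hbar\mathbb{Z}^{n}))$-moderate. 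Crucially, since $a$ is strictly positive with $a\ge a_{0}$ in the sense of~\eqref{inpdis} and $\psi\ge 0$ has unit mass, positivity is preserved: $a_{\varepsilon}(t)=\bigl((a-a_{0})*\psi_{\omega(\varepsilon)}\bigr)(t)+a_{0}\ge a_{0}>0$ for all $t$ and all $\varepsilon$.

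For step (ii), for each fixed $\varepsilon\in(0,1]$ the data $a_{\varepsilon}\in L_{1}^{\infty}([0,T])$ with $\inf_{t}a_{\varepsilon}\ge a_{0}>0$, $b_{\varepsilon}\in L^{\infty}([0,T])$, $f_{\varepsilon}\in L^{2}([0,T];\ell^{2}_{s}(\hbar\mathbb{Z}^{n}))$ and $u_{0}\in\ell^{2}_{s}(\hbar\mathbb{Z}^{n})$ satisfy the hypotheses of Theorem~\ref{heatwellpo}; hence~\eqref{reg} has a unique solution $u_{\varepsilon}\in C([0,T];\ell^{2}_{s}(\hbar\mathbb{Z}^{n}))$ obeying~\eqref{wellpoest}, with the explicit constant~\eqref{constantthm} (with $a_{\varepsilon},b_{\varepsilon}$ in place of $a,b$ and the uniform lower bound $a_{0}$, which is legitimate since~\eqref{constantthm} is monotone in $a_{0}^{-1}$). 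Integrating~\eqref{wellpoest} over $t\in[0,T]$ gives $\|u_{\varepsilon}\|_{L^{2}([0,T];\ell^{2}_{s}(\hbar\mathbb{Z}^{n}))}^{2}\le T\,C_{T,a_{\varepsilon},b_{\varepsilon}}\bigl(\|u_{0}\|_{\ell^{2}_{s}(\hbar\mathbb{Z}^{n})}^{2}+\|f_{\varepsilon}\|_{L^{2}([0,T];\ell^{2}_{s}(\hbar\mathbb{Z}^{n}))}^{2}\bigr)$.

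The main obstacle is step (iii): the constant $C_{T,a_{\varepsilon},b_{\varepsilon}}$ in~\eqref{constantthm} carries $\|\partial_{t}a_{\varepsilon}\|_{L^{\infty}}$ and $\|a_{\varepsilon}\|_{L^{\infty}}\|b_{\varepsilon}\|_{L^{\infty}}$ inside an exponential, so with the naive scale $\omega(\varepsilon)=\varepsilon$ it behaves like $e^{c\varepsilon^{-p}}$ and the integrated estimate is \emph{not} $\varepsilon^{-N}$-moderate. I would cure this by a slow-scale regularisation: taking $\omega(\varepsilon)=(\log(1/\varepsilon))^{-1/(2\mu)}$ with $\mu$ a fixed integer $\ge\tfrac{1}{2}\max(\mu_{a}+1,\mu_{a}+\mu_{b})$, all of $\|\partial_{t}a_{\varepsilon}\|_{L^{\infty}},\|a_{\varepsilon}\|_{L^{\infty}},\|b_{\varepsilon}\|_{L^{\infty}}$ grow only like powers of $\log(1/\varepsilon)$, so the exponent in~\eqref{constantthm} is $\lesssim a_{0}^{-1}T\log(1/\varepsilon)$ and therefore $C_{T,a_{\varepsilon},b_{\varepsilon}}\lesssim(\log(1/\varepsilon))^{c}\varepsilon^{-a_{0}^{-1}T}\le c_{N}\varepsilon^{-N}$ for a suitable $N\in\mathbb{N}_{0}$; one checks in passing that with this slower scale $(a_{\varepsilon})_{\varepsilon},(b_{\varepsilon})_{\varepsilon},(f_{\varepsilon})_{\varepsilon}$ are still moderate of the required types. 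Inserting this into the integrated estimate and using the $L^{2}([0,T];\ell^{2}_{s})$-moderateness of $(f_{\varepsilon})_{\varepsilon}$ yields $\|u_{\varepsilon}\|_{L^{2}([0,T];\ell^{2}_{s}(\hbar\mathbb{Z}^{n}))}\le c\,\varepsilon^{-N'}$ for all $\varepsilon\in(0,1]$, i.e.\ $(u_{\varepsilon})_{\varepsilon}$ is $L^{2}([0,T];\ell^{2}_{s}(\hbar\mathbb{Z}^{n}))$-moderate. By Definition~\ref{vwkdef}, $(u_{\varepsilon})_{\varepsilon}$ is then a very weak solution of~\eqref{heatpde} of order $s$, proving the theorem. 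An alternative, worth recording, is to sharpen the a priori estimate itself: diagonalising~\eqref{reg} by the Fourier transform on $\hbar\mathbb{Z}^{n}$ and using that the symbol of $\hbar^{-2\alpha}(-\mathcal{L}_{\hbar})^{\alpha}$ is non-negative, so the dissipative term can only help, one can take a constant depending on $a_{\varepsilon},b_{\varepsilon}$ only through $a_{0}$ and $\|a_{\varepsilon}\|_{L^{1}([0,T])},\|b_{\varepsilon}\|_{L^{1}([0,T])}$; for coefficients that are measures (e.g.\ $b=\delta+H$) the latter stay bounded uniformly in $\varepsilon$, and moderateness of $(u_{\varepsilon})_{\varepsilon}$ is then immediate already with $\omega(\varepsilon)=\varepsilon$.
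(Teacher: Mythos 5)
Your proposal follows essentially the same route as the paper's proof: mollify $a,b,f$ at a scale $\omega(\varepsilon)$, note that strict positivity $a_{\varepsilon}\geq a_{0}$ survives mollification, solve the regularised problems via Theorem~\ref{heatwellpo}, and then tame the exponential constant \eqref{constantthm} by choosing $\omega(\varepsilon)$ of logarithmic type so that $(u_{\varepsilon})_{\varepsilon}$ is $L^{2}([0,T];\ell^{2}_{s}(\hbar\mathbb{Z}^{n}))$-moderate. Your choice $\omega(\varepsilon)=(\log(1/\varepsilon))^{-1/(2\mu)}$ is in fact slightly more careful than the paper's stated $\omega(\varepsilon)\sim|\log\varepsilon|^{-1}$, since the exponent in the constant involves $\omega(\varepsilon)^{-L_1-1}$ and $\omega(\varepsilon)^{-L_1-L_2}$ rather than $\omega(\varepsilon)^{-1}$, and it is your scaling that actually renders the exponential polynomially bounded in $\varepsilon^{-1}$.
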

Next, we define the uniqueness of the very weak solution obtained in Theorem \ref{ext} for the Cauchy problem \eqref{heatpde}. This should be regarded as if the family of very weak solution  is not ``significantly'' affected by the negligible changes in the approximations of the coefficients $a, b$ and of the source term $f$. This can be also regarded as a "stability" property.

Strictly speaking, the notion of the uniqueness of the very weak solution for the Cauchy problem \eqref{heatpde} is formulated as follows:
 \begin{defi}\label{uniquedef}
 	We say that the Cauchy problem \eqref{heatpde} has a $L^{2}([0, T] ; \ell^{2}_{s}(\hbar\mathbb{Z}^{n}))$-unique very weak solution, if
 	\begin{enumerate}
 		\item for all  $L^{\infty}_{1}$-moderate nets $a_{\varepsilon},\tilde{a}_{\varepsilon}$ such that $(a_{\varepsilon}-\tilde{a}_{\varepsilon})_{\varepsilon}$ is $L^{\infty}_{1}$-negligible;
  \item  for all  $L^{\infty}$-moderate nets $b_{\varepsilon},\tilde{b}_{\varepsilon}$ such that $ (b_{\varepsilon}-\tilde{b}_{\varepsilon})_{\varepsilon}$ are $L^{\infty}$-negligible; and
 		\item for all $L^{2}([0, T] ; \ell^{2}_{s}(\hbar\mathbb{Z}^{n}))$-moderate nets $f_{\varepsilon},\tilde{f}_{\varepsilon}$ such that $(f_{\varepsilon}-\tilde{f}_{\varepsilon})_{\varepsilon}$ is \\ $L^{2}([0, T] ; \ell^{2}_{s}(\hbar\mathbb{Z}^{n}))$-negligible,
 	\end{enumerate}
 the net $(u_{\varepsilon}-\tilde{u}_{\varepsilon})_{\varepsilon}$ is $L^{2}([0, T] ; \ell^{2}_{s}(\hbar\mathbb{Z}^{n}))$-negligible, where
 	 $(u_{\varepsilon})_{\varepsilon}$ and $(\tilde{u}_{\varepsilon})_{\varepsilon}$ 	 are the families of
 	solution corresponding to the 	$\varepsilon$-parametrised problems
 		\begin{equation}\label{reg1}
 		\left\{\begin{array}{l}
 			\partial_{t} u_{\varepsilon}(t, k)+a_{\varepsilon}(t)\hbar^{-2\alpha}\left(-\mathcal{L}_{\hbar}\right)^{\alpha}u_{\varepsilon}(t, k)+b_{\varepsilon}(t) u_{\varepsilon}(t, k)=f_{\varepsilon}(t, k),\quad t \in(0, T], \\
 			u_{\varepsilon}(0, k)=u_{0}(k),\quad k \in \hbar\mathbb{Z}^{n}, 
 		\end{array}\right.
 	\end{equation}
 and
 	\begin{equation}\label{reg2}
 	\left\{\begin{array}{l}
 		\partial_{t} \tilde{u}_{\varepsilon}(t, k)+\tilde{a}_{\varepsilon}(t)\hbar^{-2\alpha}\left(-\mathcal{L}_{\hbar}\right)^{\alpha}\tilde{u}_{\varepsilon}(t, k)+\tilde{b}_{\varepsilon}(t) \tilde{u}_{\varepsilon}(t, k)=\tilde{f}_{\varepsilon}(t, k),\quad t \in(0, T], \\
 		\tilde{u}_{\varepsilon}(0, k)=u_{0}(k),\quad k \in \hbar\mathbb{Z}^{n}, 
 	\end{array}\right.
 \end{equation}
respectively.
 \end{defi}
 \begin{remark}
 The Colombeau algebra $\mathcal{G}(\mathbb{R})$ defined as 
\begin{equation*}
	\mathcal{G}(\mathbb{R})=\frac{C^{\infty}\text {-moderate nets }}{C^{\infty}\text {-negligible nets }},
\end{equation*}
 can also be used to formulate the uniqueness for the Cauchy problem \eqref{heatpde}.
   For more details about the Colombeau algebra, we refer to \cite{MR1187755}. The uniqueness of very weak solution in the sense of Colombeau algebra was introduced by Garetto and the third author in \cite{GRweak} and subsequently used in different settings, see e.g. \cite{MRniya1,NR:wveqn}. 
 \end{remark}
 The following theorem gives the
uniqueness of the very weak solution to the Cauchy problem \eqref{heatpde} in the sense of Definition \ref{uniquedef}.
\begin{theorem}[Uniqueness]\label{uniq}
Let   $a$ and $b$  be  distributions with  supports contained in $[0, T]$ such that $a\geq a_{0}>0$ for some positive constant $a_{0}$, and let  the source term $f(\cdot,k)$ be a  distribution with support contained in $[0,T]$, for all $k\in\hbar\mathbb{Z}^{n}$. For  $s \in \mathbb{R}$, let also $u_{0} \in \ell^{2}_{s}(\hbar\mathbb{Z}^{n})$.	 Then the very weak solution of the Cauchy problem \eqref{heatpde} is $L^{2}([0,T];\ell^{2}_{s}(\hbar\mathbb{Z}^{n}))$-unique.
\end{theorem}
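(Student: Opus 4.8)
The plan is to run the standard stability argument for very weak solutions: reduce the assertion to the a priori estimate \eqref{wellpoest} applied to the difference of two regularised solutions, and then track the $\varepsilon$-behaviour of every constant through the moderate/negligible calculus. So, in the setting of Definition \ref{uniquedef}, fix $L^{\infty}_{1}$-moderate nets $(a_\varepsilon)_\varepsilon,(\tilde a_\varepsilon)_\varepsilon$ with $(a_\varepsilon-\tilde a_\varepsilon)_\varepsilon$ $L^{\infty}_{1}$-negligible, $L^{\infty}$-moderate nets $(b_\varepsilon)_\varepsilon,(\tilde b_\varepsilon)_\varepsilon$ with $(b_\varepsilon-\tilde b_\varepsilon)_\varepsilon$ $L^{\infty}$-negligible, and $L^{2}([0,T];\ell^{2}_{s}(\hbar\mathbb{Z}^{n}))$-moderate nets $(f_\varepsilon)_\varepsilon,(\tilde f_\varepsilon)_\varepsilon$ with negligible difference. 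Let $(u_\varepsilon)_\varepsilon$ and $(\tilde u_\varepsilon)_\varepsilon$ be the families solving \eqref{reg1} and \eqref{reg2}; each $u_\varepsilon,\tilde u_\varepsilon$ exists and is unique by Theorem \ref{heatwellpo}, and, combined with the moderateness of the regularised data, the proof of Theorem \ref{ext} shows that both nets are $L^{2}([0,T];\ell^{2}_{s}(\hbar\mathbb{Z}^{n}))$-moderate. The goal is to prove that $U_\varepsilon:=u_\varepsilon-\tilde u_\varepsilon$ is $L^{2}([0,T];\ell^{2}_{s}(\hbar\mathbb{Z}^{n}))$-negligible.

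Subtracting \eqref{reg2} from \eqref{reg1} and regrouping, $U_\varepsilon$ solves a Cauchy problem of the same type,
\begin{equation*}
\partial_{t}U_\varepsilon(t,k)+a_\varepsilon(t)\hbar^{-2\alpha}(-\mathcal{L}_{\hbar})^{\alpha}U_\varepsilon(t,k)+b_\varepsilon(t)U_\varepsilon(t,k)=g_\varepsilon(t,k),\qquad U_\varepsilon(0,k)=0,
\end{equation*}
with source term $g_\varepsilon:=(f_\varepsilon-\tilde f_\varepsilon)-(a_\varepsilon-\tilde a_\varepsilon)\hbar^{-2\alpha}(-\mathcal{L}_{\hbar})^{\alpha}\tilde u_\varepsilon-(b_\varepsilon-\tilde b_\varepsilon)\tilde u_\varepsilon$. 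I would first show that $(g_\varepsilon)_\varepsilon$ is $L^{2}([0,T];\ell^{2}_{s}(\hbar\mathbb{Z}^{n}))$-negligible: the term $f_\varepsilon-\tilde f_\varepsilon$ is negligible by hypothesis; for the second term, for each fixed $\hbar>0$ the operator $\hbar^{-2\alpha}(-\mathcal{L}_{\hbar})^{\alpha}$ is a Fourier multiplier with bounded symbol on $\mathbb{T}^{n}_{\hbar}$ — equivalently, convolution with the rapidly decaying sequence $(a_{j}^{(\alpha)})$ — hence bounded on $\ell^{2}_{s}(\hbar\mathbb{Z}^{n})$, so $\hbar^{-2\alpha}(-\mathcal{L}_{\hbar})^{\alpha}\tilde u_\varepsilon$ inherits $L^{2}([0,T];\ell^{2}_{s}(\hbar\mathbb{Z}^{n}))$-moderateness from $(\tilde u_\varepsilon)_\varepsilon$, and multiplying it by the $L^{\infty}$-negligible net $(a_\varepsilon-\tilde a_\varepsilon)_\varepsilon$ gives a negligible net, since a negligible net times a moderate net is negligible; the third term is treated identically using the $L^{\infty}$-negligibility of $(b_\varepsilon-\tilde b_\varepsilon)_\varepsilon$. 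Thus $(g_\varepsilon)_\varepsilon$ is negligible.

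Then I would apply the a priori estimate \eqref{wellpoest} of Theorem \ref{heatwellpo} to the problem solved by $U_\varepsilon$, with coefficients $a_\varepsilon,b_\varepsilon$, source $g_\varepsilon$ and zero initial data, which (after integrating in $t$) gives
\begin{equation*}
\|U_\varepsilon\|^{2}_{L^{2}([0,T];\ell^{2}_{s}(\hbar\mathbb{Z}^{n}))}\le T\,C_{T,a_\varepsilon,b_\varepsilon}\,\|g_\varepsilon\|^{2}_{L^{2}([0,T];\ell^{2}_{s}(\hbar\mathbb{Z}^{n}))},
\end{equation*}
where $C_{T,a_\varepsilon,b_\varepsilon}$ is the constant \eqref{constantthm} with $a,b$ replaced by $a_\varepsilon,b_\varepsilon$. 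If the net $(C_{T,a_\varepsilon,b_\varepsilon})_\varepsilon$ is moderate, then the right-hand side is the product of a moderate net and a negligible one, hence negligible, so $(U_\varepsilon)_\varepsilon$ is $L^{2}([0,T];\ell^{2}_{s}(\hbar\mathbb{Z}^{n}))$-negligible and, by Definition \ref{uniquedef}, the very weak solution is $L^{2}([0,T];\ell^{2}_{s}(\hbar\mathbb{Z}^{n}))$-unique. I expect the moderateness of $(C_{T,a_\varepsilon,b_\varepsilon})_\varepsilon$ to be the main obstacle: the prefactor $a_0^{-1}\|a_\varepsilon\|_{L^{\infty}}$ is moderate because $(a_\varepsilon)_\varepsilon$ is, but the exponential factor is moderate only provided its exponent grows at most logarithmically in $1/\varepsilon$, not like an arbitrary power $\varepsilon^{-N}$ — otherwise the exponential would destroy moderateness. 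This should be handled by invoking the sharper form of the energy estimate in which the coefficients enter only through their $L^{1}([0,T])$-in-time norms, so that $\|b_\varepsilon\|_{L^{1}([0,T])}$, $\|\partial_{t}a_\varepsilon\|_{L^{1}([0,T])}$ and $\|a_\varepsilon\|_{L^{1}([0,T])}$ stay polynomially bounded (in fact bounded, for $\delta+H$- and Heaviside-type coefficients), the $L^{\infty}$-form stated in Theorem \ref{heatwellpo} being the clean special case whose proof already supplies the $L^{1}$-version; alternatively one restricts the mollifiers in \eqref{aeps} to slow-scale nets. Once $(C_{T,a_\varepsilon,b_\varepsilon})_\varepsilon$ is known to be moderate, everything else is the routine moderate/negligible bookkeeping indicated above.
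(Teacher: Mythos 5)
Your proposal follows essentially the same route as the paper: form the difference equation for $u_\varepsilon-\tilde u_\varepsilon$ with zero initial data and negligible source $g_\varepsilon$, run the energy estimate of Theorem \ref{heatwellpo} on it (the paper re-derives this inline via the functional $E_\varepsilon=(a_\varepsilon\widehat w_\varepsilon,\widehat w_\varepsilon)$ and Gronwall rather than quoting \eqref{wellpoest} as a black box), and verify that the constant $C_{T,a_\varepsilon,b_\varepsilon}$ remains under control against the negligibility of $g_\varepsilon$. The ``main obstacle'' you flag is handled in the paper precisely by your second alternative: the regularisations are mollifications at scale $\omega(\varepsilon)\sim|\log\varepsilon|^{-1}$, so the structure-theorem bounds \eqref{std} keep the exponent in $C_{T,a_\varepsilon,b_\varepsilon}$ logarithmic in $1/\varepsilon$ and the whole constant polynomially bounded, after which the moderate/negligible bookkeeping you describe (including your correct, and in the paper unstated, argument that $\hbar^{-2\alpha}(-\mathcal{L}_{\hbar})^{\alpha}$ is bounded on $\ell^{2}_{s}(\hbar\mathbb{Z}^{n})$ for fixed $\hbar$, so that $g_\varepsilon$ is negligible) closes the proof.
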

In the following theorem, we prove the consistency results with the classical case:
\begin{theorem}[Consistency]\label{cnst}
Let $s\in\mathbb{R}^{n}$ and $f \in L^{2}([0, T], \ell^{2}_{s}(\hbar\mathbb{Z}^{n}))$. Assume that $a \in L_{1}^{\infty}\left([0, T]\right)$ satisfies $\inf\limits_{t \in[0, T]}a(t)= a_{0}>0$  and $b \in L^{\infty}\left([0, T]\right)$. Let also $u_{0} \in \ell^{2}_{s}(\hbar\mathbb{Z}^{n})$ for any $s \in \mathbb{R}$. Then the regularised net $u_\varepsilon$ converges, as $\varepsilon \rightarrow 0$, in $L^{2}([0, T]; \ell^{2}_{s}(\hbar\mathbb{Z}^{n}))$ to the classical solution of the Cauchy problem \eqref{heatpde}.
\end{theorem}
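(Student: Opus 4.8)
The plan is to compare the regularised net $u_\varepsilon$ with the classical solution $u$ of \eqref{heatpde} (which exists and is unique by Theorem \ref{heatwellpo}) by writing down the equation satisfied by the difference $v_\varepsilon := u_\varepsilon - u$ and estimating it via the same energy method that underlies Theorem \ref{heatwellpo}. First I would fix Friedrichs mollifications $a_\varepsilon, b_\varepsilon, f_\varepsilon$ of $a, b, f$ with $\omega(\varepsilon)\to 0$; since $a\in L_1^\infty$ and $b\in L^\infty$ and $f\in L^2([0,T];\ell^2_s)$ are genuine (regular) objects, standard mollifier theory gives $a_\varepsilon\to a$ in $L^\infty$ together with $\partial_t a_\varepsilon \to \partial_t a$ in (say) $L^2([0,T])$, $b_\varepsilon \to b$ in the appropriate weak-$*$/$L^p$ sense, and $f_\varepsilon \to f$ in $L^2([0,T];\ell^2_s)$; moreover $\inf_t a_\varepsilon(t)\ge a_0$ is preserved because convolution with a nonnegative kernel of unit mass preserves the lower bound. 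In particular $(a_\varepsilon)_\varepsilon$ is $L^\infty_1$-moderate, $(b_\varepsilon)_\varepsilon$ is $L^\infty$-moderate and $(f_\varepsilon)_\varepsilon$ is $L^2([0,T];\ell^2_s)$-moderate, so these are admissible regularisations in the sense of Definition \ref{vwkdef}.

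Next I would subtract the two equations. Since $u$ solves \eqref{heatpde} with coefficients $a,b,f$ and $u_\varepsilon$ solves \eqref{reg} with $a_\varepsilon, b_\varepsilon, f_\varepsilon$, the difference $v_\varepsilon$ satisfies, with zero initial datum,
\begin{equation*}
\partial_t v_\varepsilon + a_\varepsilon \hbar^{-2\alpha}(-\mathcal{L}_\hbar)^\alpha v_\varepsilon + b_\varepsilon v_\varepsilon = (f_\varepsilon - f) + (a - a_\varepsilon)\hbar^{-2\alpha}(-\mathcal{L}_\hbar)^\alpha u + (b - b_\varepsilon)u =: g_\varepsilon ,
\end{equation*}
with $v_\varepsilon(0,\cdot)=0$. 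This is again a Cauchy problem of the type \eqref{heatpde}, with the regular coefficients $a_\varepsilon, b_\varepsilon$ (uniformly satisfying the hypotheses of Theorem \ref{heatwellpo}, with constants bounded uniformly in $\varepsilon$ because $\|a_\varepsilon\|_{L^\infty}, \|\partial_t a_\varepsilon\|_{L^\infty}, \|b_\varepsilon\|_{L^\infty}$ are controlled) and source $g_\varepsilon$ and initial datum $0$. Applying the a priori estimate \eqref{wellpoest} to $v_\varepsilon$ gives
\begin{equation*}
\|v_\varepsilon(t,\cdot)\|_{\ell^2_s(\hbar\mathbb{Z}^n)}^2 \le C \,\|g_\varepsilon\|_{L^2([0,T];\ell^2_s(\hbar\mathbb{Z}^n))}^2,
\end{equation*}
with $C$ independent of $\varepsilon$. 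It then remains to show $\|g_\varepsilon\|_{L^2([0,T];\ell^2_s)}\to 0$: the term $f_\varepsilon - f$ tends to zero by the choice of mollification; the term $(b-b_\varepsilon)u$ tends to zero since $u\in C([0,T];\ell^2_s)$ is bounded and $b_\varepsilon\to b$ in $L^2([0,T])$ while the Fourier multiplier is bounded; and the term $(a-a_\varepsilon)\hbar^{-2\alpha}(-\mathcal{L}_\hbar)^\alpha u$ tends to zero because, for fixed $\hbar>0$, the operator $\hbar^{-2\alpha}(-\mathcal{L}_\hbar)^\alpha$ is a bounded Fourier multiplier on $\ell^2_s(\hbar\mathbb{Z}^n)$ (its symbol on the torus $\mathbb{T}^n_\hbar$ is bounded), so $\hbar^{-2\alpha}(-\mathcal{L}_\hbar)^\alpha u \in C([0,T];\ell^2_s)$ is bounded, and $a_\varepsilon\to a$ in $L^2([0,T])$.

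The main obstacle I anticipate is purely bookkeeping: making sure the constant $C$ coming from \eqref{wellpoest} is genuinely uniform in $\varepsilon$ — this requires checking that $\|\partial_t a_\varepsilon\|_{L^\infty([0,T])}$ stays bounded, which is true precisely because $a\in L_1^\infty$ (so $\partial_t a\in L^\infty$ and $\partial_t a_\varepsilon = (\partial_t a)*\psi_{\omega(\varepsilon)}$ has $L^\infty$ norm $\le \|\partial_t a\|_{L^\infty}$), rather than $a$ being a mere distribution; this is exactly why the hypothesis here is $a\in L_1^\infty$ and not just $a$ a strictly positive distribution. A secondary point to handle carefully is the dependence on $\hbar$: the convergence $\|g_\varepsilon\|\to 0$ holds for each fixed $\hbar>0$, which is all that is claimed, but one should note that the rate degenerates as $\hbar\to 0$ because of the factor $\hbar^{-2\alpha}$ multiplying the Laplacian term — this is consistent with the statement, which fixes $\hbar$. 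Finally, by uniqueness of the classical solution (Theorem \ref{heatwellpo}) the limit is independent of the chosen mollifier, so $u_\varepsilon \to u$ in $L^2([0,T];\ell^2_s(\hbar\mathbb{Z}^n))$ as claimed.
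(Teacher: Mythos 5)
Your proposal is correct and follows essentially the same route as the paper: both write the difference $u_\varepsilon-u$ as the solution of the regularised Cauchy problem with zero initial datum and source $g_\varepsilon=(f_\varepsilon-f)+(a-a_\varepsilon)\hbar^{-2\alpha}(-\mathcal{L}_\hbar)^{\alpha}u+(b-b_\varepsilon)u$, apply the energy estimate of Theorem \ref{heatwellpo} with constants uniform in $\varepsilon$, and conclude from $g_\varepsilon\to 0$ in $L^{2}([0,T];\ell^{2}_{s}(\hbar\mathbb{Z}^{n}))$. Your added remarks on the uniformity of the constant (via $\|\partial_t a_\varepsilon\|_{L^\infty}\le\|\partial_t a\|_{L^\infty}$ and preservation of the lower bound $a_0$) and on the boundedness of the multiplier $\hbar^{-2\alpha}(-\mathcal{L}_\hbar)^{\alpha}$ on $\ell^2_s$ for fixed $\hbar$ make explicit what the paper leaves implicit.
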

 The following theorem shows that the classical solution of \eqref{heatpde} in the semi-classical setting $\hbar \mathbb{Z}^n$ obtained in Theorem \ref{heatwellpo} for $s=0$, recaptures the classical solution in the Euclidean setting $\mathbb{R}^n$ as in Theorem \ref{Eucheatwellpo}, provided the latter exists. 
\begin{theorem}\label{semlmtclass}
Let $\alpha\in(0,1]$ and $s=0$. Let $u$ and $v$ be the classical solution of the Cauchy problems \eqref{heatpde} on $\hbar \mathbb{Z}^n$ and \eqref{heatpdeEuc} on $\mathbb{R}^n$, respectively. Assume that for the initial Cauchy data we have $u_0 \in H^{m}(\mathbb{R}^{n})$ for some $m\geq 4\alpha$.
 Then  we have the following uniform in $t\in [0,T]$ convergence:
\begin{equation}
\|v(t,\cdot)-u(t,\cdot)\|_{\ell^{2}(\hbar\mathbb{Z}^{n})} \rightarrow 0, \text { as } \hbar \rightarrow 0\,.  
\end{equation}
\end{theorem}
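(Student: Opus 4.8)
The plan is to work entirely on the Fourier side, comparing the solution $v$ on $\mathbb{R}^n$ with the solution $u$ on $\hbar\mathbb{Z}^n$ through their symbols. Recall that the discrete Fourier transform sends $\ell^2(\hbar\mathbb{Z}^n)$ to $L^2(\mathbb{T}^n_\hbar)$ (the torus of side $1/\hbar$), and on that side the operator $\hbar^{-2\alpha}(-\mathcal{L}_\hbar)^\alpha$ acts as multiplication by the symbol $\hbar^{-2\alpha}\left[\sum_{l=1}^n 4\sin^2(\pi\hbar\xi_l)\right]^\alpha$, while $(-\mathcal{L})^\alpha$ on $\mathbb{R}^n$ acts as multiplication by $|2\pi\xi|^{2\alpha}$. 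The key elementary fact is that $\hbar^{-2\alpha}\left[\sum_l 4\sin^2(\pi\hbar\xi_l)\right]^\alpha \to |2\pi\xi|^{2\alpha}$ pointwise as $\hbar\to 0$, with the quantitative bound
\begin{equation*}
\Bigl|\,\hbar^{-2\alpha}\bigl[\textstyle\sum_{l=1}^n 4\sin^2(\pi\hbar\xi_l)\bigr]^\alpha - |2\pi\xi|^{2\alpha}\,\Bigr| \lesssim \hbar^{2}|\xi|^{2\alpha+2},
\end{equation*}
valid for $\hbar|\xi|$ bounded, which follows from $4\sin^2(\pi\hbar\xi_l) = (2\pi\hbar\xi_l)^2 + O((\hbar\xi_l)^4)$ and the local Lipschitz/Hölder behaviour of $r\mapsto r^\alpha$. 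This is where the hypothesis $m\geq 4\alpha$ enters: the error carries two extra powers of $\xi$ relative to $|2\pi\xi|^{2\alpha}$, i.e. a factor behaving like $|\xi|^{2\alpha+2}$ against a solution that one must control in $H^{2\alpha+2}\subseteq H^{m/2+\ldots}$; tracking the Duhamel/energy estimate one sees $H^{4\alpha}$ regularity of $u_0$ is what is needed to close it.

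First I would set up the comparison rigorously: restrict $v$ to the lattice, $v|_{\hbar\mathbb{Z}^n}$, and note (by Plancherel on $\mathbb{T}^n_\hbar$ versus $\mathbb{R}^n$, exactly as in the proof technique of \cite[Theorem 1.2]{Wave-JDE}) that $\|v(t,\cdot)-u(t,\cdot)\|_{\ell^2(\hbar\mathbb{Z}^n)}$ splits into a \emph{tail} term — the mass of $\widehat v(t,\cdot)$ outside the cube $[-\tfrac{1}{2\hbar},\tfrac{1}{2\hbar}]^n$, which is controlled by $\hbar^{2m}\|v(t,\cdot)\|_{H^m(\mathbb{R}^n)}^2$ and hence tends to $0$ since Theorem \ref{Eucheatwellpo} gives $v\in C([0,T];H^m(\mathbb{R}^n))$ with a bound uniform in $t$ — and a \emph{bulk} term, $\int_{[-1/2\hbar,1/2\hbar]^n}|\widehat v(t,\xi)-\widetilde{u}(t,\xi)|^2\,d\xi$, where $\widetilde u$ denotes the (periodized) discrete Fourier transform of $u$. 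For the bulk term, both $\widehat v$ and $\widetilde u$ solve linear ODEs in $t$ for each fixed $\xi$, with the same $f$ and $u_0$ but with symbols differing by the $O(\hbar^2|\xi|^{2\alpha+2})$ quantity above and with the same potential $b(t)$. Subtracting the two ODEs and applying Duhamel (or a Grönwall argument, using $a\geq a_0>0$ so that the exponential semigroup factors are bounded by $1$ in absolute value on the relevant region), the difference $\widehat v - \widetilde u$ is bounded in terms of $\int_0^T |a(\tau)|\cdot\hbar^2|\xi|^{2\alpha+2}|\widehat v(\tau,\xi)|\,d\tau$ times the bounded evolution factor.

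Then I would integrate this bound in $\xi$ over the cube and use Cauchy–Schwarz in $\tau$ together with $\|a\|_{L^\infty}<\infty$ to get
\begin{equation*}
\int_{[-1/2\hbar,1/2\hbar]^n} |\widehat v(t,\xi)-\widetilde u(t,\xi)|^2\,d\xi \;\lesssim\; \hbar^{4}\int_0^T \int_{\mathbb{R}^n} |\xi|^{4\alpha+4}\,|\widehat v(\tau,\xi)|^2\,d\xi\,d\tau \;\lesssim\; \hbar^{4}\,\|v\|_{L^2([0,T];H^{2\alpha+2}(\mathbb{R}^n))}^2,
\end{equation*}
which is finite and $O(\hbar^4)$ provided $u_0\in H^{2\alpha+2}(\mathbb{R}^n)$; since $m\geq 4\alpha\geq 2\alpha+2$ fails in general for small $\alpha$, one actually uses the full strength $m\geq 4\alpha$ together with the parabolic smoothing of the heat semigroup — the factor $e^{-c a_0 |2\pi\xi|^{2\alpha} t}$ in Duhamel absorbs extra powers of $|\xi|$ away from $t=0$, and near $t=0$ the $H^{4\alpha}$ datum supplies them. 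Combining the tail and bulk estimates and taking the supremum over $t\in[0,T]$ yields $\|v(t,\cdot)-u(t,\cdot)\|_{\ell^2(\hbar\mathbb{Z}^n)}\to 0$ uniformly in $t$, as claimed.

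The main obstacle I anticipate is the bookkeeping of exactly which Sobolev index is needed: one must carefully balance the two extra powers of $|\xi|$ in the symbol error against the parabolic gain from the semigroup and the available regularity of $u_0$, making sure the constants are uniform in $\hbar$ and in $t\in[0,T]$; the rest is a routine energy/Duhamel estimate on the Fourier side combined with the standard lattice-versus-continuum Plancherel comparison.
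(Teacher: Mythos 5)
Your overall strategy (Fourier-side comparison of the discrete and continuous symbols, reduced to an estimate on the symbol difference times $\widehat v$) is the same as the paper's, which writes the equation satisfied by $w=u-v$ on the lattice with source $a(t)\left((-\mathcal{L})^{\alpha}-\hbar^{-2\alpha}(-\mathcal{L}_{\hbar})^{\alpha}\right)v$ and invokes the $s=0$ energy estimate of Theorem \ref{heatwellpo}, rather than your per-frequency Duhamel argument with a tail/bulk split. That structural difference is harmless. The genuine gap is in your key symbol estimate and the ensuing Sobolev bookkeeping. You bound the symbol error by $\hbar^{2}|\xi|^{2\alpha+2}$ (mean-value/Lipschitz route), which forces you to control $v$ in $H^{2\alpha+2}$; since the hypothesis only gives $m\geq 4\alpha$, and $4\alpha\geq 2\alpha+2$ only when $\alpha\geq 1$, your argument does not close for $\alpha<1$. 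Your proposed rescue via parabolic smoothing cannot work for small $\alpha$: the time integration against $e^{-ca_{0}|2\pi\xi|^{2\alpha}\tau}$ gains at most $2\alpha$ derivatives, whereas you need to gain $2\alpha+2-4\alpha=2-2\alpha$ derivatives, and $2\alpha\geq 2-2\alpha$ only for $\alpha\geq 1/2$; moreover you never actually carry out this step.

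The paper's fix is the one you mention in passing but do not use: since $r\mapsto r^{\alpha}$ is $\alpha$-H\"older on $[0,\infty)$, one has $\left|A^{\alpha}-B^{\alpha}\right|\leq\left|A-B\right|^{\alpha}$ with $A=|2\pi\xi|^{2}$ and $B=\hbar^{-2}\sum_{l}4\sin^{2}(\pi\hbar\xi_{l})$, so the Taylor remainder $|A-B|\lesssim\hbar^{2}|\xi|^{4}$ yields a symbol error of size $\hbar^{2\alpha}|\xi|^{4\alpha}$. This pairs exactly with the hypothesis $u_{0}\in H^{m}$, $m\geq 4\alpha$ (via $\|(1+|\cdot|^{2})^{m/2}\widehat v\|_{L^{2}}$), at the cost of the slower rate $\hbar^{2\alpha}$ instead of your claimed $\hbar^{2}$ — which is all that is needed, since the theorem only asserts convergence. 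If you replace your quantitative bound by the H\"older one, your argument closes for all $\alpha\in(0,1]$ without any appeal to parabolic smoothing.
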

\begin{remark}
    Let us point out that the solution $u,v$ as in Theorem \ref{semlmtclass} are regarded as solution to the Cauchy problems \eqref{heatpde} and \eqref{heatpdeEuc}, respectively,  with the same  Cauchy data $u_0$, source term $f$ and time-dependent coefficients $a,b$. The same hypothesis holds true in Theorem \ref{semlmtvwk} on the convergence of the nets of the very weak solution in the semi-classical and Euclidean settings of the corresponding regularised heat equations.
\end{remark}

The analogous to Theorem \ref{semlmtclass} statement in the ``very weak sense'' reads as follows: 
\begin{theorem}\label{semlmtvwk}
     let $\alpha\in(0,1]$ and $s=0$. Let $\left(u_{\varepsilon}\right)_{\varepsilon}$ and $\left(v_{\varepsilon}\right)_{\varepsilon}$ be the very weak solution of the Cauchy problem \eqref{heatpde} on $\hbar \mathbb{Z}^n$ and   \eqref{heatpdeEuc} on $\mathbb{R}^n$, respectively. Assume that for the initial Cauchy data, we have
     $u_0 \in H^{m}(\mathbb{R}^{n})$ for some $m\geq 4\alpha$.
 Then we have the following uniform in $t\in [0,T]$ convergence
     \begin{equation}
      \left\|v_{\varepsilon}(t,\cdot)-u_{\varepsilon}(t,\cdot)\right\|_{\ell^{2}(\hbar\mathbb{Z}^{n})} \rightarrow 0 \text { as } \hbar \rightarrow 0,
     \end{equation}
 and pointwise for  $\varepsilon \in(0,1]$.
\end{theorem}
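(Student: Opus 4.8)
The plan is to deduce this from the semi-classical limit theorem for the classical solution, Theorem \ref{semlmtclass}, applied with $\varepsilon\in(0,1]$ held fixed to the regularised Cauchy problems. First I would note that, for each fixed $\varepsilon$, the mollified coefficients $a_{\varepsilon}=a*\psi_{\omega(\varepsilon)}$ and $b_{\varepsilon}=b*\psi_{\omega(\varepsilon)}$ are smooth on $[0,T]$; convolution with the non-negative, unit-mass mollifier $\psi_{\omega(\varepsilon)}$ preserves the strict positivity bound, so $a_{\varepsilon}\geq a_{0}>0$, and moreover $a_{\varepsilon}\in L_{1}^{\infty}([0,T])$, $b_{\varepsilon}\in L^{\infty}([0,T])$, while $f_{\varepsilon}$ is a corresponding moderate regularisation of the source with the spatial regularity required by the two notions of very weak solution. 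Hence, for this fixed $\varepsilon$, the nets $u_{\varepsilon}$ and $v_{\varepsilon}$ are exactly the classical solutions furnished by Theorems \ref{heatwellpo} and \ref{Eucheatwellpo} of the problems \eqref{heatpde} and \eqref{heatpdeEuc} with $(a,b,f)$ replaced by $(a_{\varepsilon},b_{\varepsilon},f_{\varepsilon})$ and the same datum $u_{0}\in H^{m}(\mathbb{R}^{n})$, $m\geq 4\alpha$. Applying Theorem \ref{semlmtclass} to this pair then gives $\|v_{\varepsilon}(t,\cdot)-u_{\varepsilon}(t,\cdot)\|_{\ell^{2}(\hbar\mathbb{Z}^{n})}\to 0$ as $\hbar\to 0$, uniformly in $t\in[0,T]$; since $\varepsilon\in(0,1]$ was arbitrary this is the asserted convergence, pointwise in $\varepsilon$.

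For completeness I would also recall the mechanism of Theorem \ref{semlmtclass}. Taking Fourier transforms — on the torus $\mathbb{T}^{n}_{\hbar}$ for the lattice equation and on $\mathbb{R}^{n}$ for the Euclidean one — reduces both Cauchy problems, frequency by frequency, to scalar first-order linear ODEs in $t$, solved explicitly by integrating factors built from $\int_{0}^{t}a_{\varepsilon}(\tau)\,\mathrm{d}\tau$ and $\int_{0}^{t}b_{\varepsilon}(\tau)\,\mathrm{d}\tau$ together with the respective symbols, the discrete symbol $p_{\hbar}(\xi):=\hbar^{-2\alpha}\bigl[\sum_{l=1}^{n}4\sin^{2}(\pi\hbar\xi_{l})\bigr]^{\alpha}$ and the continuous symbol $|2\pi\xi|^{2\alpha}$. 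By Plancherel on $\hbar\mathbb{Z}^{n}$ one then estimates $\|v_{\varepsilon}(t,\cdot)-u_{\varepsilon}(t,\cdot)\|_{\ell^{2}(\hbar\mathbb{Z}^{n})}$ by splitting the frequency integral into a bulk part $\{|\xi|\leq R\}$ and a tail $\{|\xi|>R\}$. On the tail the two (uniformly bounded) propagators, as well as the aliasing error from periodising $\widehat{v_{\varepsilon}}$ onto $\mathbb{T}^{n}_{\hbar}$, are dominated by the $H^{m}$-tails of $u_{0}$ and $f_{\varepsilon}$ and hence made small by taking $R$ large, uniformly in $\hbar$ and $t$. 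On the bulk one invokes the quantitative symbol comparison: using $|x^{\alpha}-y^{\alpha}|\leq|x-y|^{\alpha}$ for $\alpha\in(0,1]$ and $|4\sin^{2}u-4u^{2}|\lesssim u^{4}$ one gets $|p_{\hbar}(\xi)-|2\pi\xi|^{2\alpha}|\lesssim\hbar^{2\alpha}|\xi|^{4\alpha}$, which, fed into the Duhamel comparison of the two propagators (with $|e^{-x}-e^{-y}|\leq|x-y|$ for $x,y\geq 0$ and the lower bound $a_{\varepsilon}\geq a_{0}$), produces a frequency weight $|\xi|^{4\alpha}$; this is precisely what the assumption $u_{0}\in H^{m}(\mathbb{R}^{n})$ with $m\geq 4\alpha$ (and the matching regularity of $f_{\varepsilon}$) absorbs, after which letting $\hbar\to 0$ for fixed $R$ closes the estimate.

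The hard part is genuinely the uniform-in-$t$ control of this bulk term while $a_{\varepsilon},b_{\varepsilon}$ are only bounded (and $a_{\varepsilon}$ merely once weakly differentiable) in $t$, so that the integrating-factor representation and the $|\xi|^{4\alpha}$ derivative loss must be balanced with care — but this work is already done in the proof of Theorem \ref{semlmtclass}, so nothing new is needed here. The only new feature of the very weak setting is innocuous: $\varepsilon$ stays fixed throughout and the conclusion is only claimed pointwise in $\varepsilon$, so the degeneration as $\varepsilon\to 0$ of the well-posedness constant $C_{T,a_{\varepsilon},b_{\varepsilon}}$ of Theorem \ref{heatwellpo} never enters the argument. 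This completes the proposed plan.
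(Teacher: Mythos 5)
Your proposal is correct and follows the paper's own route: the paper's entire proof of Theorem \ref{semlmtvwk} is the observation that, for each fixed $\varepsilon\in(0,1]$, the regularised problems have regular coefficients (with $a_{\varepsilon}\geq\tilde{a}_{0}>0$ preserved by mollification), so the argument of Theorem \ref{semlmtclass} applies without significant change — which is exactly your first paragraph made explicit. The only discrepancy is in your supplementary ``recollection'' of the mechanism of Theorem \ref{semlmtclass}: the paper does not use integrating factors with a bulk/tail frequency splitting, but instead applies the energy estimate of Theorem \ref{heatwellpo} (with $s=0$) to the difference $w=u-v$, treating $a(t)\left((-\mathcal{L})^{\alpha}-\hbar^{-2\alpha}(-\mathcal{L}_{\hbar})^{\alpha}\right)v$ as a source term and absorbing the symbol-difference bound $\hbar^{2\alpha}|\xi|^{4\alpha}$ via $m\geq 4\alpha$; since you only invoke that theorem as a black box, this is immaterial to the correctness of your argument.
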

\section{Preliminaries}\label{sec:pre}

The Fourier analysis related to the discrete lattice $\mathbb{Z}^{n}$ and the torus $\mathbb{T}^{n}$ has been developed by Turunen and the third author in \cite{ruzhansky+turunen-book}. The pseudo-difference operators and the related symbolic calculus on the weighted sequence space $\ell^{p}_{s}(\mathbb{Z}^{n})$ have been extensively studied in \cite{kibti}. The aim of the section is to recall the preliminaries and important tools related to the discrete lattice $\hbar\mathbb{Z}^{n}$ and the torus $\mathbb{T}^{n}_{\hbar}$ that will be necessary for the analysis that we will follow.

\subsection{Spaces of functions and distributions on the lattice  $\hbar\mathbb{Z}^{n}$}
The Schwartz space $\mathcal{S}(\hbar\mathbb{Z}^{n})$ on the lattice $\hbar\mathbb{Z}^n$ is the space of rapidly decreasing functions $\varphi: \hbar\mathbb{Z}^n \rightarrow \mathbb{C}$; that is, we write $\varphi \in \mathcal{S}(\hbar\mathbb{Z}^{n})$ if for any $M<\infty$ there exists a constant $C_{\varphi, M}$ such that
\begin{equation*}\label{topnorm}
	|\varphi(k)| \leq C_{\varphi, M}(1+|k|)^{-M}, \quad \text { for all } k \in \hbar\mathbb{Z}^{n},    
\end{equation*}
where  $|k|$ stands for the Euclidean  norm of $k \in \hbar\mathbb{Z}^{n}$; 
 i.e., for $k = \hbar(k_{1},\cdots, k_{n})$, we have  $|k|=\hbar \left(\sum\limits_{l=1}^{n} k_{l}^{2}\right)^{\frac{1}{2}}$. The topology on $\mathcal{S}(\hbar\mathbb{Z}^{n})$ is given by the seminorms $p_j$, defined as  
\begin{equation*}
	p_j(\varphi):=\sup\limits_{k \in \hbar\mathbb{Z}^n}(1+|k|)^j|\varphi(k)|,\quad \varphi\in\mathcal{S}(\hbar\mathbb{Z}^{n})\text{ and } j \in \mathbb{N}_{0}.
\end{equation*}
The  topological dual of $\mathcal{S}(\hbar\mathbb{Z}^{n})$ is the space of tempered distributions defined as
\begin{equation*}
    \mathcal{S}^{\prime}(\hbar\mathbb{Z}^{n}):=\mathcal{L}\left(\mathcal{S}(\hbar\mathbb{Z}^{n}), \mathbb{C}\right),
\end{equation*}
 i.e., $ \mathcal{S}^{\prime}(\hbar\mathbb{Z}^{n})$ is the space of all linear continuous functionals on $\mathcal{S}(\hbar\mathbb{Z}^{n})$ of the form
\begin{equation*}
	\varphi \mapsto( u, \varphi):=\sum_{k \in \hbar\mathbb{Z}^n} u(k) \varphi(k),\quad \varphi \in \mathcal{S}(\hbar\mathbb{Z}^{n}).
\end{equation*}
We note that, in contrast to $\mathcal{S}^{\prime}(\mathbb{R}^{n})$, the tempered distributions in $\mathcal{S}^{\prime}(\hbar\mathbb{Z}^{n})$ in the semi-classical setting are pointwise well-defined functions on  $\hbar\mathbb{Z}^{n}$. Furthermore, a tempered distribution $u: \hbar\mathbb{Z}^n \rightarrow \mathbb{C}$ has    polynomial growth at infinity, i.e., there exist positive constants $M$ and $C_{u, M}$ such that
\begin{equation*}
	|u(k)| \leq C_{u, M}(1+|k|)^{M},\quad k\in\hbar\mathbb{Z}^{n}.
\end{equation*}

For $s \in \mathbb{R}$, one can extend the usual space $\ell^2(\hbar\mathbb{Z}^n)$ to the weighted space $\ell^{2}_{s}(\hbar\mathbb{Z}^{n})$ of order $s$ as follows: for  $f: \hbar\mathbb{Z}^n \rightarrow \mathbb{C}$ we write $f \in \ell^{2}_{s}(\hbar\mathbb{Z}^{n})$ whenever the following norm is finite
\begin{equation*}
	\|f\|_{\ell_{s}^{2}(\hbar\mathbb{Z}^{n})}:=\left(\sum_{k \in \hbar\mathbb{Z}^{n}}(1+|k|)^{2s}|f(k)|^{2}\right)^{\frac{1}{2}}\,.
\end{equation*}
Clearly, the weighted $\ell^{2}$-spaces also include $\ell^{2}(\hbar\mathbb{Z}^{n})$ as a special case  when $s=0$.

Observe that the weighted space $\ell^{2}_{s}(\hbar\mathbb{Z}^{n})$ is a Hilbert space when endowed with the natural inner product:
\begin{equation}\label{innerpwl2}
    ( u,v)_{\ell^{2}_{s}(\hbar\mathbb{Z}^{n})}:=\sum_{k\in\hbar\mathbb{Z}^{n}}(1+|k|)^{2s}u(k)\overline{v(k)}\,,
\end{equation}
where $\overline{v(k)}$ stands for the complex conjugate of $v(k)$.

Let us point out that the structure of the  space of tempered distributions $\mathcal{S}^{\prime}(\hbar\mathbb{Z}^{n})$,  as well as this of the Schwartz space $\mathcal{S}(\hbar\mathbb{Z}^{n})$, are closely related to the weighted $\ell^{2}$-spaces. In particular, we have the following useful relations:

\begin{equation}\label{distribution}
\mathcal{S}(\hbar\mathbb{Z}^{n})=\bigcap_{s \in \mathbb{R}} \ell_{s}^{2}(\hbar\mathbb{Z}^{n})\text{ and }	\mathcal{S}^{\prime}(\hbar\mathbb{Z}^{n})=\bigcup_{s \in \mathbb{R}} \ell_{s}^{2}(\hbar\mathbb{Z}^{n}).
\end{equation}
\subsection{Space of periodic functions and distributions on the  torus $\mathbb{T}_{\hbar}^{n}$}

Let us now introduce the torus denoted by $\mathbb{T}_{\hbar}^{n}$ that will be useful for the subsequent analysis, especially when the semi-classical limit $\hbar \rightarrow 0$ is taken.  The torus $\mathbb{T}_{\hbar}^{n}$ can  be realised via the identification   \[\mathbb{T}_{\hbar}^{n}=\left[-\frac{1}{2\hbar},\frac{1}{2\hbar}\right]^{n},\quad \hbar>0.\] Consequently, the space $C^{k}(\mathbb{T}_{\hbar}^{n})$ consists of functions that are $\frac{1}{\hbar}$-periodic and $k$-times continuously differentiable functions on torus $\mathbb{T}_{\hbar}^{n}$. The space $C^{\infty}(\mathbb{T}_{\hbar}^{n})$ of test functions on the torus $\mathbb{T}_{\hbar}^{n}$ can  then be defined  as
\begin{equation*}
    C^{\infty}(\mathbb{T}_{\hbar}^{n}):=\bigcap\limits_{k=1}^{\infty}C^{k}(\mathbb{T}_{\hbar}^{n}).
\end{equation*}
 The Fr\'echet topology on the space of smooth functions $ C^{\infty}(\mathbb{T}_{\hbar}^{n})$ is given by the seminorms $p_{j}$ defined as
 \begin{equation*}
     p_{j}(\psi):=\max\{\|\partial^{\alpha}\psi\|_{C(\mathbb{T}_{\hbar}^{n})}:|\alpha|\leq j\},\quad j\in \mathbb{N}_{0}, \alpha\in \mathbb{N}^{n}_{0}.
 \end{equation*}
The topological dual of  $ C^{\infty}(\mathbb{T}_{\hbar}^{n})$ is the space of periodic distributions defined as
\begin{equation*}
   \mathcal{D}^{\prime}(\mathbb{T}_{\hbar}^{n}):=\mathcal{L}\left(C^{\infty}(\mathbb{T}_{\hbar}^{n}), \mathbb{C}\right),
\end{equation*}
 i.e., it is the  space of all linear continuous functionals on $C^{\infty}(\mathbb{T}_{\hbar}^{n})$ of the form
\begin{equation*}
	\varphi \mapsto \int_{\mathbb{T}_{\hbar}^{n}} \varphi(\xi) \psi(\xi) \mathrm{d} \xi,\quad \psi \in C^{\infty}(\mathbb{T}_{\hbar}^{n}).
\end{equation*}
\subsection{Related semi-classical Fourier analysis} 
 
The Fourier transform operator  \[\mathcal{F}_{\hbar\mathbb{Z}^{n}}:\mathcal{S}(\hbar\mathbb{Z}^{n})\to C^{\infty}(\mathbb{T}_{\hbar}^{n})\]  is defined as
\begin{equation*}
	\mathcal{F}_{\hbar\mathbb{Z}^{n}}u(\xi):=\hbar^{n/2}\sum_{k \in \hbar \mathbb{Z}^{n}} u(k) e^{-2 \pi i k \cdot \xi}, \quad \xi\in\mathbb{T}_{\hbar}^{n}\,.
\end{equation*}
For the inverse Fourier transform operator \[\mathcal{F}^{-1}_{\hbar\mathbb{Z}^{n}}:C^{\infty}(\mathbb{T}_{\hbar}^{n})\to \mathcal{S}(\hbar\mathbb{Z}^{n})\] we have
\begin{equation*}
	\mathcal{F}^{-1}_{\hbar\mathbb{Z}^{n}}v(k):=\hbar^{n/2}\int_{\mathbb{T}_{\hbar}^{n}}v(\xi)e^{2\pi i k\cdot\xi}\mathrm{d}\xi, \quad k\in\hbar\mathbb{Z}^{n}\,,
\end{equation*}
implying that the Fourier inversion formula is given by
\begin{equation}
	u(k)=\hbar^{n/2}\int_{\mathbb{T}_{\hbar}^{n}}\widehat{u}(\xi) e^{2 \pi i k \cdot \xi}  \mathrm{d}\xi, \quad k \in \hbar\mathbb{Z}^{n}.
\end{equation}
The Fourier transform $\mathcal{F}_{\hbar\mathbb{Z}^{n}}$ can be uniquely extended
to the space of tempered distributions $\mathcal{S}^{\prime}(\hbar\mathbb{Z}^{n})$ when realised via the distributional duality 
\begin{equation}
    (\mathcal{F}_{\hbar\mathbb{Z}^{n}}u,\psi):=( u,\iota \circ \mathcal{F}_{\hbar\mathbb{Z}^{n}}^{-1}\psi), 
\end{equation}
where $u\in \mathcal{S}^{\prime}(\hbar\mathbb{Z}^{n})$, $ \psi\in C^{\infty}(\mathbb{T}_{\hbar}^{n}),$ and  $(\iota \circ f)(x)=f(-x)$. 

Hence for the operator $\mathcal{F}_{\hbar\mathbb{Z}^{n}}$ we can write in more generality $\mathcal{F}_{\hbar\mathbb{Z}^{n}}:\mathcal{S}^{\prime}(\hbar\mathbb{Z}^{n})\to\mathcal{D}^{\prime}(\mathbb{T}_{\hbar}^{n})$, and consequently we can define the inverse operator $\mathcal{F}_{\hbar\mathbb{Z}^{n}}^{-1}:\mathcal{D}^{\prime}(\mathbb{T}_{\hbar}^{n})\to\mathcal{S}^{\prime}(\hbar\mathbb{Z}^{n})$. With the use of the latter, we can define the periodic Sobolev spaces $H^{s}(\mathbb{T}_{\hbar}^{n})$, for $s\in\mathbb{R}$ as follows:

\begin{equation*}
	H^{s}(\mathbb{T}_{\hbar}^{n}):=\left\{u\in \mathcal{D}^{\prime}(\mathbb{T}_{\hbar}^{n}):\|u\|_{H^{s}(\mathbb{T}_{\hbar}^{n})}:=\left(\sum_{k \in \hbar\mathbb{Z}^{n}}(1+|k|) ^{2 s}\left|\mathcal{F}^{-1}_{\hbar\mathbb{Z}^{n}}u(k)\right|^{2}\right)^{1/2}<\infty \right\}.
\end{equation*}
For any $s \in \mathbb{R}$ the periodic Sobolev space $H^s\left(\mathbb{T}^{n}_{\hbar}\right)$ is a Hilbert space endowed with the inner product given by
\begin{equation}\label{innerpsh2}
	( u, v)_{H^s\left(\mathbb{T}^{n}_{\hbar}\right)}:=\sum_{k \in \hbar\mathbb{Z}^{n}}(1+|k|)^{2 s} \mathcal{F}^{-1}_{\hbar\mathbb{Z}^{n}}u(k) \overline{\mathcal{F}^{-1}_{\hbar\mathbb{Z}^{n}}v(k)}. 
\end{equation}
Clearly, the periodic Sobolev space also includes $L^{2}(\mathbb{T}_{\hbar}^{n})$ as a special case when $s=0$.

We have the following relation between  the space of test functions $C^{\infty}(\mathbb{T}_{\hbar}^{n})$ and the space of periodic distributions $\mathcal{D}^{\prime}(\mathbb{T}_{\hbar}^{n})$ and the periodic Sobolev spaces $H^s(\mathbb{T}_{\hbar}^{n})$:
\begin{equation*}
	C^{\infty}(\mathbb{T}_{\hbar}^{n})=\bigcap_{s \in \mathbb{R}} H^s(\mathbb{T}_{\hbar}^{n})\text{ and }\mathcal{D}^{\prime}(\mathbb{T}_{\hbar}^{n})=\bigcup_{s \in \mathbb{R}} H^s(\mathbb{T}_{\hbar}^{n}).
\end{equation*}
Combining the inner product \eqref{innerpwl2} and \eqref{innerpsh2} with the Fourier transform, we obtain the following Plancherel formula 
\begin{equation}\label{planch}
	\|f\|_{\ell_{s}^{2}(\hbar\mathbb{Z}^{n})}=\left\|(1+|\cdot|)^{s}f(\cdot)\right\|_{\ell^{2}(\hbar\mathbb{Z}^{n})}=\|\widehat{f}\|_{H^{s}(\mathbb{T}_{\hbar}^{n})},\quad f\in \ell_{s}^{2}(\hbar\mathbb{Z}^{n}).
\end{equation}
\subsection{The discrete fractional Laplacian}
The discrete fractional Laplacian on the  lattice $\hbar\mathbb{Z}^{n}$ can be defined by restricting the usual fractional centered difference operators in the Euclidean setting $\mathbb{R}^{n}$, see \cite[Section 5.4]{duarte}. For more details about the fractional difference operators on $\mathbb{R}^{n}$, we refer to \cite{samko1993fractional}.

Rigorously, for a positive $\alpha>0$ and for $u$ being a complex-valued grid function on $\hbar\mathbb{Z}^{n}$, the discrete fractional Laplacian $\left(-\mathcal{L}_{\hbar}\right)^\alpha$ is defined by
\begin{equation}\label{fracdef}
    \left(-\mathcal{L}_{\hbar}\right)^{\alpha} u(k):=\sum_{j \in \mathbb{Z}^n} a_j^{(\alpha)} u(k+j \hbar), \quad \alpha>0,
\end{equation}
where the generating function $a_j^{(\alpha)}$ is given by
\begin{equation*}\label{ajformula}
a_j^{(\alpha)}:=\int\limits_{\left[-\frac{1}{2},\frac{1}{2}\right]^{n}}\left[\sum_{l=1}^n 4 \sin ^2\left(\pi \xi_l\right)\right]^\alpha e^{-2 \pi i j \cdot \xi} \mathrm{d} \xi.
\end{equation*}
The Fourier transform and the Fourier inversion formula, allow to verify that
\begin{equation}\label{gensum}
\sum_{j \in \mathbb{Z}^n} a_j^{(\alpha)} e^{2 \pi i j \cdot\xi}=\left[\sum_{l=1}^n 4 \sin ^2\left(\pi \xi_l\right)\right]^{\alpha},\quad\xi\in\mathbb{T}_{\hbar}^{n}.
\end{equation}

Further, using the  relation \eqref{gensum} and the shifting property of the Fourier transform, we can compute the Fourier transform of the discrete fractional Laplacian $\left(-\mathcal{L}_{\hbar}\right)^{\alpha}$   as follows:
\begin{eqnarray}\label{lapft}
	\left(\mathcal{F}_{\hbar\mathbb{Z}^{n}}\left(-\mathcal{L}_{\hbar}\right)^{\alpha}u\right)(\xi)&=&\sum_{k\in\hbar\mathbb{Z}^{n}}\left(-\mathcal{L}_{\hbar}\right)^{\alpha}u(k)e^{-2\pi i k\cdot\xi}\nonumber\\
	&=&\sum_{k\in\hbar\mathbb{Z}^{n}}\left(\sum_{j \in \mathbb{Z}^n} a_j^{(\alpha)} u(k+j \hbar)\right)e^{-2\pi i k\cdot\xi}\nonumber\\
	&=&\left(\sum_{j\in\mathbb{Z}^{n}}a_j^{(\alpha)}e^{2\pi i j\hbar\cdot\xi}\right)\widehat{u}(\xi)\nonumber\\
	&=&\left[\sum_{l=1}^n 4 \sin ^2\left(\pi \hbar\xi_{l}\right)\right]^{\alpha}\widehat{u}(\xi),
\end{eqnarray}
for all $\xi\in\mathbb{T}_{\hbar}^{n},$ and consequently the Fourier transform of fractional Laplacian is
\begin{eqnarray}\label{lapeucft}
    \left(\mathcal{F}_{\hbar\mathbb{Z}^{n}}\left(-\mathcal{L}\right)^{\alpha}u\right)(\xi)&=&\left(\left(-\mathcal{L}\right)^{\alpha}u,e^{2\pi i k\cdot\xi}\right)\nonumber\\&=&\left(u,\left(-\mathcal{L}\right)^{\alpha}e^{2\pi i k\cdot\xi}\right)\nonumber\\
    &=&\left(u,|2\pi\xi|^{2\alpha}e^{2\pi i k\cdot\xi}\right)\nonumber\\
    &=&|2\pi\xi|^{2\alpha}\widehat{u}(\xi),
\end{eqnarray}
 for all $\xi\in\mathbb{T}_{\hbar}^{n}$. For more details about the construction and other properties of discrete fractional Laplacian on  $\hbar\mathbb{Z}^{n}$, see  \cite{Klein-Arxiv}.

The consistency of the formula \ref{fracdef} for the discrete fractional Laplacian $(-\mathcal{L}_{\hbar})^{\alpha}$ with the usual discrete Laplacian introduced in the monograph \cite{Wave-JDE}, can be understood by explicitly computing the expansion coefficients $a_{j}^{(\alpha)}$, for $\alpha=1$ and $j\in\mathbb{Z}^{n}$. Indeed, it is easy to check that
\begin{equation*}
    a_{0}^{(1)}=2n,\quad a_{\pm v_{l}}^{(1)}=-1,\text{ and } a_{j}^{(1)}=0,\quad\text{for all } j\neq 0,\pm v_{l},
\end{equation*}
where  $v_{l}$  is the  $l^{t h}$  basis vector in  $\mathbb{Z}^{n}$, having all zeros except for  $1$  at the  $l^{t h}$  component.
This gives
\begin{eqnarray*}
	(-\mathcal{L}_{\hbar})^{1}u(k)&=&2nu(k)-\sum\limits_{j=\pm v_{l}}u(k+j\hbar)\nonumber\\
	&=&2nu(k)-\sum\limits_{l=1}^{n}\left(u(k+v_{l}\hbar)+u(k-v_{l}\hbar)\right),
\end{eqnarray*}
which is usual discrete Laplacian on $\hbar\mathbb{Z}^{n}$. 
\section{Proof of the main results}\label{sec:proofs}

In this section, we give the proofs of all the results presented above in Section \ref{sec:main}. 
Before moving on to proving our first result, let us note that the proof of Theorem \ref{Eucheatwellpo} in the Euclidean setting $\mathbb{R}^n$ follows the same arguments as the ones that are used in the proof of Theorem \ref{heatwellpo} in the setting $\hbar \mathbb{Z}^n$ except for using the inner product of $L^{2}_{m}(\mathbb{R}^{n})$ instead of $H^{s}(\mathbb{T}_{\hbar}^{n})$. Therefore we just prove Theorem \ref{heatwellpo} and the proof of Theorem \ref{Eucheatwellpo} should be considered verbatim.

 \begin{proof}[Proof of Theorem \ref{heatwellpo}]
     Taking the Fourier transform of the Cauchy problem \eqref{heatpde} with respect to $k\in\hbar\mathbb{Z}^{n}$ and using \eqref{lapft}, we obtain
     \begin{equation}\label{transpde}
	\left\{\begin{array}{l}
		\partial_{t}\widehat{u}(t, \xi)+a(t)\nu^{2}(\xi) \widehat{u}(t, \xi)+b(t) \widehat{u}(t, \xi)=\widehat{f}(t,\xi), \quad \text { with } (t,\xi) \in[0, T]\times\mathbb{T}_{\hbar}^{n}, \\
		\widehat{u}(0, \xi)=\widehat{u}_{0}(\xi), \quad \xi \in  \mathbb{T}_{\hbar}^{n},
	\end{array}\right.
\end{equation}
where
\begin{equation}\label{nuvalue}
\nu^{2}(\xi)=\hbar^{-2\alpha}\left[\sum_{l=1}^n 4 \sin ^2\left(\pi\hbar \xi_{l}\right)\right]^{\alpha} \geq 0.
\end{equation}
Define the energy functional for the Cauchy problem \eqref{transpde} by
\begin{equation}\label{energy}
    E(t,\xi):=( a(t)\widehat{u}(t,\xi),\widehat{u}(t,\xi)),\quad (t,\xi) \in[0, T]\times\mathbb{T}_{\hbar}^{n},
\end{equation}
where $(\cdot,\cdot)$ is the inner product in the Sobolev space $H^{s}(\mathbb{T}_{\hbar}^{n})$ given by \eqref{innerpsh2}. It is easy to check that
\begin{equation}\label{engest}
    \inf _{t \in[0, T]} \{a(t)\}(\widehat{u}(t,\xi),\widehat{u}(t,\xi))\leq( a(t)\widehat{u}(t,\xi),\widehat{u}(t,\xi))\leq \sup_{t \in[0, T]} \{a(t)\}(\widehat{u}(t,\xi),\widehat{u}(t,\xi)).
\end{equation}
Since $a \in L_1^{\infty}([0, T])$, there exist two positive constants $a_0$ and $a_1$ such that
\begin{equation}\label{abound}
\inf _{t \in[0, T]} \{a(t)\}=a_0 \quad \text { and } \sup _{t \in[0, T]} \{a(t)\}=a_1 .   
\end{equation}
Combining the  equations \eqref{energy} and \eqref{abound} together with the inequality \eqref{engest}, we obtain the following bounds for the energy functional
\begin{equation}\label{engyest}
    a_{0}\|\widehat{u}(t,\cdot)\|_{H^{s}}^{2}\leq E(t,\xi)\leq     a_{1}\|\widehat{u}(t,\cdot)\|^{2}_{H^{s}},\quad (t,\xi) \in[0, T]\times\mathbb{T}_{\hbar}^{n}.
\end{equation}
 Differentiating the energy functional $E(t,\xi)$ and using  \eqref{transpde}, we obtain
 \begin{eqnarray*}\label{engder}
     \partial_{t}E(t,\xi)&=&( a_{t}(t)\widehat{u}(t,\xi),\widehat{u}(t,\xi))+( a(t)\widehat{u}_{t}(t,\xi),\widehat{u}(t,\xi))+( a(t)\widehat{u}(t,\xi),\widehat{u}_{t}(t,\xi))\nonumber\\
     &=&( a_{t}(t)\widehat{u}(t,\xi),\widehat{u}(t,\xi))-( a^{2}(t)\nu^{2}(\xi)\widehat{u}(t,\xi),\widehat{u}(t,\xi))-\nonumber\\
     &&( a(t)b(t)\widehat{u}(t,\xi),\widehat{u}(t,\xi))+( a(t)\widehat{f}(t,\xi),\widehat{u}(t,\xi))-\nonumber\\
     &&( a(t)\widehat{u}(t,\xi),a(t)\nu^{2}(\xi)\widehat{u}(t,\xi))-( a(t)\widehat{u}(t,\xi),b(t)\widehat{u}(t,\xi))+\nonumber\\
     &&( a(t)\widehat{u}(t,\xi),\widehat{f}(t,\xi))\nonumber\\
          &=&a_{t}(t)( \widehat{u}(t,\xi),\widehat{u}(t,\xi))-2a^{2}(t)( \nu(\xi)\widehat{u}(t,\xi),\nu(\xi)\widehat{u}(t,\xi))-\nonumber\\
     &&2a(t)b(t)( \widehat{u}(t,\xi),\widehat{u}(t,\xi))+a(t)( \widehat{f}(t,\xi),\widehat{u}(t,\xi))+\nonumber\\
     &&a(t)( \widehat{u}(t,\xi),\widehat{f}(t,\xi))\nonumber\\
     &\leq&\left(|a_{t}(t)|+2|a(t)||b(t)|\right) \|\widehat{u}(t,\cdot)\|^{2}_{H^{s}}+2|a(t)||\operatorname{Re}( \widehat{f}(t,\xi),\widehat{u}(t,\xi))|.
 \end{eqnarray*}
 Using the hypothesis that $a\in L_{1}^{\infty}([0,T])$ and $ b\in L^{\infty}([0,T])$, we obtain
 \begin{eqnarray}\label{eest}
     \partial_{t}E(t,\xi)&\leq& \left(\|a_{t}\|_{L^{\infty}}+2\|a\|_{L^{\infty}}\|b\|_{L^{\infty}}\right)\|\widehat{u}(t,\cdot)\|^{2}_{H^{s}}+2\|a\|_{L^{\infty}}\|\widehat{f}(t,\cdot)\|_{H^{s}}\|\widehat{u}(t,\cdot)\|_{H^{s}}\nonumber\\
     &\leq& \left(\|a_{t}\|_{L^{\infty}}+2\|a\|_{L^{\infty}}\|b\|_{L^{\infty}}+\|a\|_{L^{\infty}}\right)\|\widehat{u}(t,\cdot)\|^{2}_{H^{s}}+\|a\|_{L^{\infty}}\|\widehat{f}(t,\cdot)\|^{2}_{H^{s}}.\nonumber\\
 \end{eqnarray}
If we set   $\kappa_{1}=\|a_{t}\|_{L^{\infty}}+2\|a\|_{L^{\infty}}\|b\|_{L^{\infty}}+\|a\|_{L^{\infty}}$ and $\kappa_{2}=\|a\|_{L^{\infty}}$, then putting together  \eqref{engyest} and \eqref{eest}, we obtain
\begin{equation}\label{finalenrgy}
\partial_{t}E(t,\xi)\leq a_{0}^{-1}\kappa_{1}E(t,\xi)+\kappa_{2}\|\widehat{f}(t,\cdot)\|^{2}_{H^{s}}.   
\end{equation}
Applying the Gronwall’s lemma to the inequality \eqref{finalenrgy}, we get
\begin{equation}\label{Eoest}
    E(t, \xi) \leq e^{\int_{0}^{t}a_{0}^{-1}\kappa_{1}\mathrm{d}\tau}\left(E(0, \xi)+\int_{0}^{t}\kappa_{2}\|\widehat{f}(\tau, \cdot)\|^2_{H^{s}}\mathrm{d}\tau\right),
\end{equation}
for all $(t,\xi)\in[0,T]\times\mathbb{T}_{\hbar}^{n}$. Again combining the estimates  \eqref{engyest} and \eqref{Eoest}, we obtain
\begin{eqnarray*}
    a_{0}\|\widehat{u}(t,\cdot)\|^{2}_{H^{s}}\leq E(t,\xi)&\leq &e^{\int_{0}^{t}a_{0}^{-1}\kappa_{1}\mathrm{d}\tau}\left(E(0, \xi)+\int_{0}^{t}\kappa_{2}\|\widehat{f}(\tau, \cdot)\|^2_{H^{s}}\mathrm{d}\tau\right)\nonumber\\
    &\leq&e^{a_{0}^{-1}\kappa_{1}T}\left(a_{1}\|\widehat{u}(0,\cdot)\|^{2}_{H^{s}}+\kappa_{2} \int_{0}^{T}\|\widehat{f}(\tau, \cdot)\|^2_{H^{s}}\mathrm{d}\tau\right).
\end{eqnarray*}
Further, using the Plancherel formula \eqref{planch}, we obtain the required estimate
\begin{equation*}\label{uestclass}
    \|u(t,\cdot)\|^{2}_{\ell^{2}_{s}(\hbar\mathbb{Z}^{n})}\leq C_{T,a,b}\left(\|u_{0}\|^{2}_{\ell^{2}_{s}(\hbar\mathbb{Z}^{n})}+\|f\|^{2}_{L^{2}([0,T];\ell^{2}_{s}(\hbar\mathbb{Z}^{n}))}\right),\quad\text{ for all }t\in[0,T], 
\end{equation*}
where the constant $C_{T,a,b}$ is given by
\begin{equation*}
  C_{T,a,b}=a_{0}^{-1}\|a\|_{L^{\infty}}e^{a_{0}^{-1}(\|a_{t}\|_{L^{\infty}}+2\|a\|_{L^{\infty}}\|b\|_{L^{\infty}}+\|a\|_{L^{\infty}})T}.
\end{equation*}
 The uniqueness of the solution follows immediately from the above estimate.  This completes the proof.
 \end{proof}
 Thus, we have obtained the well-posedness for the Cauchy problem \eqref{heatpde} in the weighted spaces $\ell^{2}_{s}(\hbar\mathbb{Z}^{n}))$ for all $s\in\mathbb{R}$ and consequently we have distributional well-posedness for the Cauchy problem \eqref{heatpde} in the space of tempered distribution $\mathcal{S}^{\prime}(\hbar\mathbb{Z}^{n})$. We will now prove the existence of the very weak solution to the Cauchy problem \eqref{heatpde} in the case of distributional coefficients. 
 \begin{proof}[Proof of Theorem \ref{ext}]
 Consider the regularised Cauchy problem
		\begin{equation}\label{vwprf}
		\left\{\begin{array}{l}
			\partial_{t} u_{\varepsilon}(t, k)+a_{\varepsilon}(t)\hbar^{-2\alpha}\left(-\mathcal{L}_{\hbar}\right)^{\alpha}u_{\varepsilon}(t, k)+b_{\varepsilon}(t) u_{\varepsilon}(t, k)=f_{\varepsilon}(t, k), \quad t \in(0, T],\\
			u_{\varepsilon}(0, k)=u_{0}(k),\quad k \in \hbar\mathbb{Z}^{n},
		\end{array}\right.
	\end{equation} where the coefficient $a_{\varepsilon}$ is $L^{\infty}_{1}$-moderate, $b_{\varepsilon}$ is $L^{\infty}$-moderate and the source term  $f_{\varepsilon}$  is $L^{2}([0,T];\ell^{2}_{s}(\hbar\mathbb{Z}^{n})$-moderate regularisation of the coefficients $a,b$ and the source term $f$, respectively.
 Taking the Fourier transform  with respect to $k\in\hbar\mathbb{Z}^{n}$, we obtain 
 \begin{equation*}\label{regtranspde}
	\left\{\begin{array}{l}
			\partial_{t} \widehat{u}_{\varepsilon}(t, \xi)+a_{\varepsilon}(t)\nu^{2}(\xi)\widehat{u}_{\varepsilon}(t, \xi)+b_{\varepsilon}(t) \widehat{u}_{\varepsilon}(t, \xi)=\widehat{f}_{\varepsilon}(t, \xi),\quad(t, \xi) \in(0, T] \times \mathbb{T}_{\hbar}^{n}, \\
			\widehat{u}_{\varepsilon}(0, \xi)=\widehat{u}_{0}(\xi),\quad k \in \mathbb{T}_{\hbar}^{n},
		\end{array}\right.
\end{equation*}
where $\nu^{2}(\xi)$ is given by \eqref{nuvalue}. Define the energy functional for the Cauchy problem \eqref{vwprf} by
\begin{equation}\label{weakenergy}
    E_{\varepsilon}(t,\xi):=( a_{\varepsilon}(t)\widehat{u}_{\varepsilon}(t,\xi),\widehat{u}_{\varepsilon}(t,\xi)),\quad (t,\xi) \in[0, T]\times\mathbb{T}_{\hbar}^{n},
\end{equation}
where $(\cdot,\cdot)$ is the inner product in the Sobolev space $H^{s}(\mathbb{T}_{\hbar}^{n})$.
 It is easy to check that
\begin{equation*}\label{weakengest}
    \inf _{t \in[0, T]} \{a_{\varepsilon}(t)\}(\widehat{u}_{\varepsilon}(t,\xi),\widehat{u}_{\varepsilon}(t,\xi))\leq( a_{\varepsilon}(t)\widehat{u}_{\varepsilon}(t,\xi),\widehat{u}_{\varepsilon}(t,\xi))\leq \sup_{t \in[0, T]} \{a_{\varepsilon}(t)\}(\widehat{u}_{\varepsilon}(t,\xi),\widehat{u}_{\varepsilon}(t,\xi)).
\end{equation*}
Since $a$ and $b$ are  distributions, by the structure theorem for compactly supported distributions, there exist $L_{1}, L_{2} \in \mathbb{N}$ and $c_{1}, c_{2}>0$ such that
\begin{equation}\label{std}
	\left|\partial_{t}^{k} a_{\varepsilon}(t)\right| \leq c_{1} \omega(\varepsilon)^{-L_{1}-k}\text{  and  }\left|\partial_{t}^{k} b_{\varepsilon}(t)\right| \leq c_{2} \omega(\varepsilon)^{-L_{2}-k},\quad k \in \mathbb{N}_{0},
\end{equation}
for all  $t \in[0, T]$, where $\omega(\varepsilon)$ is given by \eqref{aeps}. Since $a\geq a_{0}>0$, we can write
\begin{equation}\label{amin}
	a_{\varepsilon}(t)=\left(a*\psi_{\omega(\varepsilon)}\right)(t)=\langle a,\tau_{t}\tilde{\psi}_{\omega(\varepsilon)}\rangle\geq\tilde{a}_{0}>0,
\end{equation}
where $\tilde{\psi}(x)=\psi(-x),x\in\mathbb{R}$ and $\tau_{t}\psi(\xi)=\psi(\xi-t),\xi\in\mathbb{R}$.

 Now applying  Theorem \ref{heatwellpo} to the Cauchy problem \eqref{vwprf}, we have the following estimate
\begin{equation}\label{uepsinorm}
    \|u_{\varepsilon}(t,\cdot)\|^{2}_{\ell^{2}_{s}(\hbar\mathbb{Z}^{n})}\leq C_{T,a_{\varepsilon},b_{\varepsilon}}\left(\|u_{0}\|^{2}_{\ell^{2}_{s}(\hbar\mathbb{Z}^{n})}+\|f_{\varepsilon}\|^{2}_{L^{2}([0,T];\ell^{2}_{s}(\hbar\mathbb{Z}^{n}))}\right),\quad\text{ for all }t\in[0,T], 
\end{equation}
where the constant $C_{T,a_{\varepsilon},b_{\varepsilon}}$ is given by
\begin{equation}\label{constcte}
  C_{T,a_{\varepsilon},b_{\varepsilon}}=\tilde{a}_{0}^{-1}\|a_{\varepsilon}\|_{L^{\infty}}e^{\tilde{a}_{0}^{-1}(\|\partial_{t}a_{\varepsilon}\|_{L^{\infty}}+2\|a_{\varepsilon}\|_{L^{\infty}}\|b_{\varepsilon}\|_{L^{\infty}}+\|a_{\varepsilon}\|_{L^{\infty}})T}.
\end{equation}
Combining the estimates \eqref{std} and \eqref{uepsinorm} with \eqref{constcte} we get
\begin{eqnarray*}\label{constnorm}
    \|u_{\varepsilon}(t,\cdot)\|^{2}_{\ell^{2}_{s}(\hbar\mathbb{Z}^{n})}&\leq&\tilde{a}_{0}^{-1}c_{1}\omega(\varepsilon)^{-L_{1}}e^{\left(c_{1}\omega(\varepsilon)^{-L_{1}-1}+2c_{1}c_{2}\omega(\varepsilon)^{-L_{1}-L_{2}}+c_{1}\omega(\varepsilon)^{-L_{1}}\right)T}\times\nonumber\\
    &&\left(\|u_{0}\|^{2}_{\ell^{2}_{s}(\hbar\mathbb{Z}^{n})}+\|f_{\varepsilon}\|^{2}_{L^{2}([0,T];\ell^{2}_{s}(\hbar\mathbb{Z}^{n}))}\right)\nonumber\\
    &\leq&\tilde{a}_{0}^{-1}e^{K_{T}\left(\omega(\varepsilon)^{-L_{1}-1}+\omega(\varepsilon)^{-L_{1}-L_{2}}+\omega(\varepsilon)^{-L_{1}}\right)T}\times\nonumber\\
    &&\left(\|u_{0}\|^{2}_{\ell^{2}_{s}(\hbar\mathbb{Z}^{n})}+\|f_{\varepsilon}\|^{2}_{L^{2}([0,T];\ell^{2}_{s}(\hbar\mathbb{Z}^{n}))}\right),
\end{eqnarray*}
where $K_{T}=\tilde{a}_{0}^{-1}c_{1}T\max\{1,2c_{2},T^{-1}\}$.
 Putting $\omega(\varepsilon)\sim|\log(\varepsilon)|^{-1}$, we obtain
\begin{equation}\label{unorm}
	\|u_{\varepsilon}(t,\cdot)\|_{\ell^{2}_{s}(\hbar\mathbb{Z}^{n})}^{2}\lesssim \varepsilon^{-3L_{1}-L_{2}-1}\left(\|u_{0}\|_{\ell^{2}_{s}(\hbar\mathbb{Z}^{n})}^{2}+\|f_{\varepsilon}\|_{L^{2}([0,T];\ell^{2}_{s}(\hbar\mathbb{Z}^{n}))}^{2}\right),
\end{equation}
for all $t\in[0,T]$ and $\varepsilon\in(0,1]$. 

Since $(f_{\varepsilon})_{\varepsilon}$ is $L^{2}([0,T];\ell^{2}_{s}(\hbar\mathbb{Z}^{n}))$-moderate regularisation of $f$, there exist positive constants $L_{3}$ and $c$ such that
\begin{equation}\label{fmoderate}
\|f_{\varepsilon}\|_{L^{2}([0,T];\ell^{2}_{s}(\hbar\mathbb{Z}^{n}))}\leq c\varepsilon^{-L_{3}}.
\end{equation}
Now by integrating the estimate \eqref{unorm} with respect to $t\in[0,T]$ and then using \eqref{fmoderate}, we obtain
\begin{equation}
	\|u_{\varepsilon}\|_{L^{2}([0,T];\ell^{2}_{s}(\hbar\mathbb{Z}^{n}))}\lesssim \varepsilon^{-3L_{1}-L_{2}-L_{3}-1}.
\end{equation}
 Thus we conclude that $(u_{\varepsilon})_{\varepsilon}$ is $L^{2}([0,T];\ell^{2}_{s}(\hbar\mathbb{Z}^{n}))$-moderate. This completes the proof.
 \end{proof}
Therefore, we have established the existence of very weak solution for the Cauchy problem \eqref{heatpde} with irregular coefficients. We will now prove the uniqueness of the very weak solution of it in the sense of Definition \ref{uniquedef}.
\begin{proof}[Proof of Theorem \ref{uniq}]
	Let $(u_{\varepsilon})_{\varepsilon}$	 and $(\tilde{u}_{\varepsilon})_{\varepsilon}$	 be the families of solution corresponding to the Cauchy problems \eqref{reg1} and \eqref{reg2}, respectively. 
	Denoting $w_{\varepsilon}:=u_{\varepsilon}-\tilde{u}_{\varepsilon}$, we get
	\begin{equation}\label{weqn}
		\left\{\begin{array}{l}
			\partial_{t} w_{\varepsilon}(t, k)+a_{\varepsilon}(t)\hbar^{-2\alpha}\left(-\mathcal{L}_{\hbar}\right)^{\alpha}w_{\varepsilon}(t, k)+b_{\varepsilon}(t)w_{\varepsilon}(t, k)=g_{\varepsilon}(t, k),\quad t\in (0,T], \\
			w_{\varepsilon}(0, k)=0,\quad k \in \hbar\mathbb{Z}^{n}, 
		\end{array}\right.
	\end{equation}
 where 
	\begin{equation}
		g_{\varepsilon}(t,k):=(\tilde{a}_{\varepsilon}-a_{\varepsilon})(t) \hbar^{-2\alpha}\left(-\mathcal{L}_{\hbar}\right)^{\alpha}\tilde{u}_{\varepsilon}(t, k)+(\tilde{b}_{\varepsilon}-b_{\varepsilon})(t) \tilde{u}_{\varepsilon}(t, k)+(f_{\varepsilon}-\tilde{f}_{\varepsilon})(t,k).
	\end{equation}
	Since the nets $(\tilde{a}_{\varepsilon}-a_{\varepsilon})_{\varepsilon}$,  $(\tilde{b}_{\varepsilon}-b_{\varepsilon})_{\varepsilon}$ and $(f_{\varepsilon}-\tilde{f}_{\varepsilon})_{\varepsilon}$ are $L^{\infty}_{1}$-negligible, $L^{\infty}$-negligible and  $L^{2}([0,T];\ell^{2}_{s}(\hbar\mathbb{Z}^{n}))$-negligible, respectively, it follows that $g_{\varepsilon}$ is $L^{2}([0,T];\ell^{2}_{s}(\hbar\mathbb{Z}^{n}))$-negligible.
	
	Taking the Fourier transform of the Cauchy problem \eqref{weqn} with respect to $k\in\hbar\mathbb{Z}^{n}$, we obtain
	\begin{equation}\label{uniquecp}
		\left\{\begin{array}{l}
			\partial_{t} \widehat{w}_{\varepsilon}(t, \xi)+a_{\varepsilon}(t)\nu^{2}(\xi)\widehat{w}_{\varepsilon}(t, \xi)+b_{\varepsilon}(t)\widehat{w}_{\varepsilon}(t, \xi)=\widehat{g}_{\varepsilon}(t, \xi),\quad(t, \xi) \in(0, T] \times \mathbb{T}_{\hbar}^{n}, \\
			\widehat{w}_{\varepsilon}(0, \xi)=0,\quad \xi \in \mathbb{T}_{\hbar}^{n},
		\end{array}\right.
	\end{equation}
where $\nu^{2}(\xi)$ is given by \eqref{nuvalue}. The energy functional for the Cauchy problem \eqref{uniquecp} is given by
\begin{equation}\label{uniqueenergy}
	E_{\varepsilon}(t,\xi):=(a_{\varepsilon}(t)\widehat{w}_{\varepsilon}(t,\xi),\widehat{w}_{\varepsilon}(t,\xi)),\quad (t,\xi) \in[0, T]\times\mathbb{T}_{\hbar}^{n},
\end{equation}
where $(\cdot,\cdot)$ is the inner product in the Sobolev space $H^{s}(\mathbb{T}_{\hbar}^{n})$.
Then using \eqref{std}, we have the following energy bounds
\begin{equation}\label{uniqueenergyest}
	\tilde{a}_{0}\|\widehat{w}_{\varepsilon}(t,\cdot)\|^{2}_{H^{s}}\leq E_{\varepsilon}(t,\xi)\leq    c_{1} \omega(\varepsilon)^{-L_{1}}\|\widehat{w}_{\varepsilon}(t,\cdot)\|^{2}_{H^{s}},\quad (t,\xi) \in[0, T]\times\mathbb{T}_{\hbar}^{n}.
\end{equation}
Further we can calculate
\begin{equation}\label{uniquueengder}
	\partial_{t}E_{\varepsilon}(t,\xi)
	\leq\left(|a^{\prime}_{\varepsilon}(t)|+2|a_{\varepsilon}(t)||b_{\varepsilon}(t)|+|a_{\varepsilon}(t)|\right) \|\widehat{w}_{\varepsilon}(t,\cdot)\|^{2}_{H^{s}}
	+|a_{\varepsilon}(t)|\|\widehat{g}_{\varepsilon}(t,\cdot)\|^{2}_{H^{s}}.
\end{equation}
 
Combining the estimates \eqref{std}, \eqref{uniqueenergyest} and \eqref{uniquueengder}, we obtain
\begin{equation}\label{uniquegroneqn}
	\partial_{t}E_{\varepsilon}(t,\xi)
	\leq\kappa_{1}\left(\omega(\varepsilon)^{-L_{1}-1}+\omega(\varepsilon)^{-L_{1}-L_{2}}+\omega(\varepsilon)^{-L_{1}}\right)E_{\varepsilon}(t,\xi)+c_{1} \omega(\varepsilon)^{-L_{1}}\|\widehat{g}_{\varepsilon}(t,\cdot)\|^{2}_{H^{s}},
\end{equation}
where $\kappa_{1}=\tilde{a}_{0}^{-1}c_{1}\max\{1,2c_{2}\}$. Applying the Gronwall’s lemma
to the inequality \eqref{uniquegroneqn}, we obtain
\begin{multline}\label{uniquegron}
	E_{\varepsilon}(t,\xi)\leq\\ e^{\int_{0}^{t}\kappa_{1}\left(\omega(\varepsilon)^{-L_{1}-1}+\omega(\varepsilon)^{-L_{1}-L_{2}}+\omega(\varepsilon)^{-L_{1}}\right)\mathrm{d}\tau}\left(E_{\varepsilon}(0,\xi)+c_{1} \omega(\varepsilon)^{-L_{1}}\int_{0}^{t}\|\widehat{g}_{\varepsilon}(\tau,\cdot)\|^{2}_{H^{s}}\mathrm{d}\tau\right).
\end{multline}
Putting together \eqref{uniqueenergyest} and \eqref{uniquegron}, and then using the fact that $\widehat{w}_{\varepsilon}(0,\xi)\equiv0$ for all $\varepsilon\in(0,1]$, we get
\begin{eqnarray*}
	\|\widehat{w}_{\varepsilon}(t,\cdot)\|^{2}&\leq&\tilde{a}_{0}^{-1}c_{1}\omega(\varepsilon)^{-L_{1}}e^{\kappa_{1}\left(\omega(\varepsilon)^{-L_{1}-1}+\omega(\varepsilon)^{-L_{1}-L_{2}}+\omega(\varepsilon)^{-L_{1}}\right)T}\int_{0}^{T}\|\widehat{g}_{\varepsilon}(\tau,\cdot)\|^{2}_{H^{s}}\mathrm{d}\tau\nonumber\\
	&\leq&\tilde{a}_{0}^{-1}c_{1}e^{\kappa_{T}\left(\omega(\varepsilon)^{-L_{1}-1}+\omega(\varepsilon)^{-L_{1}-L_{2}}+\omega(\varepsilon)^{-L_{1}}\right)}\int_{0}^{T}\|\widehat{g}_{\varepsilon}(\tau,\cdot)\|^{2}_{H^{s}}\mathrm{d}\tau,
\end{eqnarray*}
where $\kappa_{T}=c_{1}T\max\{1,2c_{2},T^{-1}\}$. Putting $\omega(\varepsilon)\sim|\log(\varepsilon)|^{-1}$ and using the Plancherel formula \eqref{planch}, we obtain
\begin{equation*}\label{wnorm}
	\|w_{\varepsilon}(t,\cdot)\|_{\ell^{2}_{s}(\hbar\mathbb{Z}^{n})}^{2}\lesssim \varepsilon^{-3L_{1}-L_{2}-1}\|g_{\varepsilon}\|_{L^{2}([0,T];\ell^{2}_{s}(\hbar\mathbb{Z}^{n}))}^{2},
\end{equation*}
for all $t\in[0,T]$. Since $g_{\varepsilon}$ is $L^{2}\left([0, T] ; \ell^{2}_{s}(\hbar\mathbb{Z}^{n})\right)$-negligible, we obtain
\begin{equation*}
\left\|w_{\varepsilon}(t,\cdot)\right\|_{\ell^{2}_{s}(\hbar\mathbb{Z}^{n})}^{2} \lesssim \varepsilon^{-3 L_1-L_2-1} \varepsilon^{3 L_1+L_2+1+q}=\varepsilon^{q}, \quad\text{ for all } q \in \mathbb{N}_0,
\end{equation*}
for all $t\in[0,T]$. Now by integrating the above estimate  with respect to $t\in[0,T]$, we get 
\begin{equation*}
\left\|w_{\varepsilon}\right\|_{L^{2}([0,T];\ell^{2}_{s}(\hbar\mathbb{Z}^{n}))} \lesssim \varepsilon^{q}, \quad\text{ for all } q \in \mathbb{N}_0.
\end{equation*}
  Thus $\left(u_{\varepsilon}-\tilde{u}_{\varepsilon}\right)_{\varepsilon}$ is $L^{2}\left([0, T] ; \ell^{2}_{s}(\hbar\mathbb{Z}^{n})\right)$-negligible. This completes the proof.
\end{proof}
 Next we prove that the
 very weak solution obtained in Theorem \ref{ext} is consistent with the classical solution obtained in Theorem \ref{heatwellpo}.
\begin{proof}[Proof of Theorem \ref{cnst}]
	Let $\tilde{u}$ be the classical solution given by Theorem \ref{heatwellpo}, that is,  $\tilde{u}$ satisfies the Cauchy problem 
	\begin{equation}\label{cnst1}
		\left\{\begin{array}{l}
			\partial_{t} \tilde{u}(t, k)+a(t) \hbar^{-2\alpha}\left(-\mathcal{L}_{\hbar}\right)^{\alpha}\tilde{u}(t, k)+b(t) \tilde{u}(t, k)=f(t,k),\quad(t, k) \in(0, T] \times \hbar\mathbb{Z}^{n}, \\
			\tilde{u}(0, k)=u_{0}(k),\quad k \in \hbar\mathbb{Z}^{n}, 
		\end{array}\right.
	\end{equation} 	
	and let  $(u_{\varepsilon})_{\varepsilon}$  be the very weak solution obtained by Theorem \ref{ext}, that is, $(u_{\varepsilon})_{\varepsilon}$ satisfies the regularised Cauchy problem 
	\begin{equation}\label{cnst2}
		\left\{\begin{array}{l}
			\partial_{t} u_{\varepsilon}(t, k)+a_{\varepsilon}(t) \hbar^{-2\alpha}\left(-\mathcal{L}_{\hbar}\right)^{\alpha}u_{\varepsilon}(t, k)+b_{\varepsilon}(t) u_{\varepsilon}(t, k)=f_{\varepsilon}(t,k),\quad t \in(0, T], \\
			u_{\varepsilon}(0, k)=u_{0}(k),\quad k \in \hbar\mathbb{Z}^{n}.
		\end{array}\right.
	\end{equation} 
	Note that by the hypothesis the nets $\left(a_{\varepsilon}-a\right)_{\varepsilon},\left(b_{\varepsilon}-b\right)_{\varepsilon}$ and $\left(f_{\varepsilon}-f\right)_{\varepsilon}$ are converging to  $0$ uniformly.  Using \eqref{cnst1} and \eqref{cnst2}, we have
	\begin{equation}\label{cnst3}
		\left\{\begin{array}{l}
			\partial_{t} \tilde{u}(t, k)+a_{\varepsilon}(t)\hbar^{-2\alpha}\left(-\mathcal{L}_{\hbar}\right)^{\alpha}\tilde{u}(t, k)+b_{\varepsilon}(t) \tilde{u}(t, k)=f_{\varepsilon}(t,k)+g_{\varepsilon}(t,k),\quad t \in(0, T], \\
			\tilde{u}(0, k)=u_{0}(k),\quad k \in \hbar\mathbb{Z}^{n},
		\end{array}\right.
	\end{equation} 	
	where
	\begin{equation*}
		g_{\varepsilon}(t,k):=\left(a_{\varepsilon}-a\right)(t) \hbar^{-2\alpha}\left(-\mathcal{L}_{\hbar}\right)^{\alpha}\tilde{u}(t, k)+\left(b_{\varepsilon}-b\right)(t) \tilde{u}(t, k)+\left(f-f_{\varepsilon}\right)(t,k),
	\end{equation*}
	$g_{\varepsilon} \in L^{2}([0, T] ; \ell^{2}_{s}(\hbar\mathbb{Z}^{n}))$ and  $g_{\varepsilon}\to0$ in $L^{2}([0, T] ; \ell^{2}_{s}(\hbar\mathbb{Z}^{n}))$ as $\varepsilon \rightarrow 0$. 

 Combining the Cauchy problem
	 \eqref{cnst2} and \eqref{cnst3}, we deduce  that the net $w_{\varepsilon}:=\left(\tilde{u}-u_{\varepsilon}\right)$  solves the Cauchy problem
	\begin{equation}\label{cnst4}
		\left\{\begin{array}{l}
			\partial_{t} w_{\varepsilon}(t, k)+a_{\varepsilon}(t) \hbar^{-2\alpha}\left(-\mathcal{L}_{\hbar}\right)^{\alpha}w_{\varepsilon}(t, k)+b_{\varepsilon}(t) w_{\varepsilon}(t, k)=g_{\varepsilon}(t,k),\quad t \in(0, T], \\
			w_{\varepsilon}(0, k)=0,\quad k \in \hbar\mathbb{Z}^{n}.
		\end{array}\right.
	\end{equation}
 Taking the Fourier transform of the Cauchy problem \eqref{cnst4} with respect to $k\in\hbar\mathbb{Z}^{n}$, we obtain
	\begin{equation}\label{consistcp}
	\left\{\begin{array}{l}
		\partial_{t} \widehat{w}_{\varepsilon}(t, \xi)+a_{\varepsilon}(t)\nu^{2}(\xi)\widehat{w}_{\varepsilon}(t, \xi)+b_{\varepsilon}(t)\widehat{w}_{\varepsilon}(t, \xi)=\widehat{g}_{\varepsilon}(t, \xi),\quad(t, \xi) \in(0, T] \times \mathbb{T}_{\hbar}^{n}, \\
		\widehat{w}_{\varepsilon}(0, \xi)=0,\quad \xi \in \mathbb{T}_{\hbar}^{n},
	\end{array}\right.
\end{equation}
where $\nu^{2}(\xi)$ is given by \eqref{nuvalue}. Define
the energy functional for the Cauchy problem \eqref{consistcp} by
\begin{equation*}
	E_{\varepsilon}(t,\xi):=( a_{\varepsilon}(t)\widehat{w}_{\varepsilon}(t,\xi),\widehat{w}_{\varepsilon}(t,\xi)),\quad (t,\xi) \in[0, T]\times\mathbb{T}_{\hbar}^{n},
\end{equation*}
where $(\cdot,\cdot)$ is the inner product in the Sobolev space $H^{s}(\mathbb{T}_{\hbar}^{n})$. 

Since the coefficients are sufficiently regular, following  the lines of Theorem \ref{heatwellpo}, the next energy estimate holds true
	\begin{equation*}
		\partial_t E_{\varepsilon}(t, \xi) \leq \kappa_{1}E_{\varepsilon}(t, \xi)+\kappa_{2}\left|\widehat{g}_{\varepsilon}(t, \xi)\right|^{2},
	\end{equation*}
for some positive constants $\kappa_{1}$ and $\kappa_{2}$. Then using the Gronwall's lemma and the  energy bounds similar to \eqref{engyest} alongwith the Plancherel formula \eqref{planch}, we obtain the following estimate
	\begin{equation*}
		\|w_{\varepsilon}(t,\cdot)\|^{2}_{\ell^{2}_{s}(\hbar\mathbb{Z}^{n})}\lesssim \|w_{\varepsilon}(0,\cdot)\|_{\ell^{2}_{s}(\hbar\mathbb{Z}^{n})}^{2}+\|g_{\varepsilon}\|_{L^{2}([0,T];\ell^{2}_{s}(\hbar\mathbb{Z}^{n}))}^{2},
	\end{equation*}
+	for all $t\in[0,T]$. Now by integrating the above estimate with respect to $t\in[0,T]$ and using the fact that $w_{\varepsilon}(0, k)\equiv0$ for all $\varepsilon\in(0,1]$, we get
 \begin{equation*}
		\|w_{\varepsilon}\|_{L^{2}([0,T];\ell^{2}_{s}(\hbar\mathbb{Z}^{n}))}^{2}\lesssim \|g_{\varepsilon}\|_{L^{2}([0,T];\ell^{2}_{s}(\hbar\mathbb{Z}^{n}))}^{2}.
	\end{equation*}
	Since $g_{\varepsilon} \rightarrow 0$ in $L^{2}([0, T] ; \ell^{2}_{s}(\hbar\mathbb{Z}^{n}))$, we have
	\begin{equation*}
		w_{\varepsilon} \to 0 \text{ in }L^{2}([0, T] ; \ell^{2}_{s}(\hbar\mathbb{Z}^{n})),\quad \varepsilon\to 0,
	\end{equation*}
	implying also that
	\begin{equation*}
		u_{\varepsilon} \to \tilde{u} \text{ in }L^{2}([0, T] ; \ell^{2}_{s}(\hbar\mathbb{Z}^{n})) ,\quad \varepsilon\to 0.
	\end{equation*} 
	Furthermore, the limit is the same for every representation of $u$, since they will differ from $\left(u_{\varepsilon}\right)_{\varepsilon}$ by a $L^{2}([0, T];\ell^{2}_{s}(\hbar\mathbb{Z}^{n}))$-negligible net. This completes the proof.
\end{proof}
\section{Semi-classical limit $\hbar\to 0$}\label{sec:limit}
In this section, we will prove the semi-classical limit theorems for the classical solution as well as for the very weak solution.
\begin{proof}[Proof of Theorem \ref{semlmtclass}]
Consider two Cauchy problems:
\begin{equation}\label{CP1}
	\left\{\begin{array}{l}
		\partial_{t} u(t, k)+a(t)\hbar^{-2\alpha}(-\mathcal{L}_{\hbar})^{\alpha} u(t, k)+b(t) u(t, k)=f(t,k), \quad  (t,k) \in(0,T]\times\hbar\mathbb{Z}^{n}, \\
		u(0, k)=u_{0}(k), \quad k\in\hbar\mathbb{Z}^{n},
	\end{array}\right.
\end{equation}
and
\begin{equation}\label{CP2}
	\left\{
	\begin{array}{ll}
		\partial_{t}v(t,x)+a(t)(-\mathcal{L})^{\alpha}v(t,x)+b(t)v(t,x)=f(t,x), \quad (t,x) \in(0,T]\times\mathbb{R}^{n},\\
		v(0,x)=u_{0}(x),\quad x\in\mathbb{R}^{n},
	\end{array}
	\right.
\end{equation}
where $\left(-\mathcal{L}\right)^{\alpha}$ is the usual fractional Laplacian on $\mathbb{R}^{n}$ given by \eqref{frlap}. We have assumed that $f$ and $u_0$ in \eqref{CP1} are the restrictions in $\hbar \mathbb{Z}^n$ of the corresponding ones in \eqref{CP2} defined on $\mathbb{R}^n$. From the equations \eqref{CP1} and \eqref{CP2}, 
denoting $w:=u-v$,  we get
\begin{equation}\label{CPF}
	\left\{
	\begin{array}{ll}
		\partial_{t}w(t,k)+a(t)\hbar^{-2\alpha}(-\mathcal{L}_{\hbar})^{\alpha}w(t,k)+b(t)w(t,k)=a(t)\left((-\mathcal{L})^{\alpha}-\hbar^{-2\alpha}(-\mathcal{L}_{\hbar})^{\alpha}\right)v(t,k),\\
		w(0,k)=0,\quad k\in\hbar\mathbb{Z}^{n}.               
	\end{array}
	\right.
\end{equation}
Since $w_{0} = 0$, applying Theorem \ref{heatwellpo} with s = 0 for the Cauchy problem \eqref{CPF} and 
using the estimate \eqref{wellpoest}, we obtain
\begin{eqnarray}\label{WP111}
	\left\|w(t,\cdot)\right\|^{2}_{\ell^{2}(\hbar\mathbb{Z}^{n})} 
	&\leq& C_{T,a,b}\left\|a\left((-\mathcal{L})^{\alpha}-\hbar^{-2\alpha}(-\mathcal{L}_{\hbar})^{\alpha}\right)v\right\|^{2}_{L^{2}([0,T];\ell^{2}(\hbar\mathbb{Z}^{n}))}\nonumber\\
 &\leq&C_{T,a,b}\left\|a\right\|^{2}_{L^{\infty}([0,T])}\left\|\left((-\mathcal{L})^{\alpha}-\hbar^{-2\alpha}(-\mathcal{L}_{\hbar})^{\alpha}\right)v\right\|^{2}_{L^{2}([0,T];\ell^{2}(\hbar\mathbb{Z}^{n}))},\nonumber\\
\end{eqnarray}
for all $t\in[0,T]$, where the constant $C_{T,a,b}$ is given by
\begin{equation*}
  C_{T,a,b}=a_{0}^{-1}\|a\|_{L^{\infty}}e^{a_{0}^{-1}(\|a_{t}\|_{L^{\infty}}+2\|a\|_{L^{\infty}}\|b\|_{L^{\infty}}+\|a\|_{L^{\infty}})T}.
\end{equation*}

 Now we will estimate the term $\left\|\left((-\mathcal{L})^{\alpha}-\hbar^{-2\alpha}(-\mathcal{L}_{\hbar})^{\alpha}\right)v\right\|^{2}_{L^{2}([0,T];\ell^{2}(\hbar\mathbb{Z}^{n}))}.$ 
Using the Plancherel formula \eqref{planch} for $s=0$, \eqref{lapft} and \eqref{lapeucft}, we have
\begin{multline}\label{fnm}
	\left\|\left((-\mathcal{L})^{\alpha}-\hbar^{-2\alpha}(-\mathcal{L}_{\hbar})^{\alpha}\right)v(t,\cdot)\right\|_{\ell^{2}(\hbar\mathbb{Z}^{n})}=\\ \left\|\left(\left[\sum_{l=1}^n (2\pi\xi_{l})^{2}\right]^{\alpha}-\hbar^{-2\alpha}\left[\sum_{l=1}^n 4 \sin ^2\left(\pi\hbar \xi_{l}\right)\right]^{\alpha}\right)\widehat{v}(t,\cdot)\right\|_{L^{2}(\mathbb{T}_{\hbar}^{n})}.
\end{multline}
Since  $|\cdot|^{\alpha}:\mathbb{R}\to \mathbb{R}$ is $\alpha$-H\"older continuous for $0<\alpha\leq1$,  we have the following inequality
\begin{equation*}
	\left||x|^{2\alpha}-|y|^{2\alpha}\right|\lesssim \left||x|^{2}-|y|^{2}\right|^{\alpha}, \quad x,y\in\mathbb{R}^{n},
\end{equation*} 
 and  $|x|=\sqrt{x_{1}^{2}+\dots+x_{n}^{2}}$. Now, using the above inequality and the Taylor expansion for $\sin^{2}(\pi\hbar\xi_{l})$, we get
\begin{eqnarray}\label{diffnorm}
	&&\left|\left[\sum_{l=1}^n (2\pi\xi_{l})^{2}\right]^{\alpha}-\hbar^{-2\alpha}\left[\sum_{l=1}^n 4 \sin ^2\left(\pi\hbar \xi_{l}\right)\right]^{\alpha}\right|\nonumber\\
	&\lesssim&
	\left|\sum\limits_{l=1}^{n}4\pi^2\xi^{2}_{l}-\hbar^{-2}\sum\limits_{l=1}^{n}4\sin^{2}(\pi\hbar\xi_{l})\right|^{\alpha}\nonumber\\
	&=&\left|\sum\limits_{l=1}^{n}\left[4\pi^2\xi^{2}_{l}-\hbar^{-2}4\left(\pi^{2}\hbar^{2}\xi_{l}^{2}-\frac{\pi^{4}}{3}\hbar^{4}\xi_{l}^{4}\cos\left(2\theta_{l}\right)\right)\right]\right|^{\alpha}\nonumber\\
	&=&\left(\frac{4\pi^{4}\hbar^{2}}{3}\right)^{\alpha}\left|\sum\limits_{l=1}^{n}\xi_{l}^{4}\cos(2\theta_{l})\right|^{\alpha}\nonumber\\
	&\lesssim&\hbar^{2\alpha}\left[\sum\limits_{l=1}^{n}\xi^{4}_{l}\right]^{\alpha}\nonumber\\
	&\lesssim&\hbar^{2\alpha}|\xi|^{4\alpha},
\end{eqnarray}
where $|\xi|^{4\alpha}=\left[\sum\limits_{l=1}^{n}\xi^{2}_{l}\right]^{2\alpha}$ and $\theta_{l}\in(0,\pi\hbar\xi_{l})$ or $(\pi\hbar\xi_{l},0)$ depending on the sign of $\xi_{l}$.  Now, combining the  estimate \eqref{diffnorm} with  \eqref{fnm}, we get
\begin{eqnarray}\label{fnmm}
\left\|\left((-\mathcal{L})^{\alpha}-\hbar^{-2\alpha}(-\mathcal{L}_{\hbar})^{\alpha}\right)v(t,\cdot)\right\|_{\ell^{2}(\hbar\mathbb{Z}^{n})}&\lesssim& \hbar^{2\alpha}\|(1+|\cdot|^{2})^{2\alpha}\widehat{v}(t,\cdot)\|_{L^{2}(\mathbb{T}_{\hbar}^{n})} \nonumber\\
&\lesssim& \hbar^{2\alpha}\|(1+|\cdot|^{2})^{m/2}\widehat{v}(t,\cdot)\|_{L^{2}(\mathbb{T}_{\hbar}^{n})},
\end{eqnarray}
whenever $m\geq 4\alpha$. Since $u_{0}\in H^{m}(\mathbb{R}^{n})$ with $m\geq 4\alpha$, using Theorem \ref{Eucheatwellpo}, we have $v\in H^{m}(\mathbb{R}^{n})$  with $m\geq 4\alpha$.
Therefore, using \eqref{WP111} and \eqref{fnmm},  we deduce that $\left\|w(t,\cdot)\right\|_{\ell^{2}(\hbar\mathbb{Z}^{n})}\to 0$ as $\hbar\to 0.$  Hence we have 
\begin{equation*}
    \left\|v(t,\cdot)-u(t,\cdot)\right\|_{\ell^{2}(\hbar\mathbb{Z}^{n})}\to 0\text{ as } \hbar\to 0,\text{ for all } t\in[0,T].
\end{equation*}
 This finishes the proof of Theorem \ref{semlmtclass}.
\end{proof}
\begin{proof}[Proof of Theorem \ref{semlmtvwk}]
    Without making any significant changes to the proof of Theorem \ref{semlmtclass}, we can prove Theorem \ref{semlmtvwk}.
\end{proof}
\section{Remarks}\label{sec:remark}
In this section we make a few remarks related to the very weak solution for the Cauchy problem \eqref{heatpdeEuc} in the Euclidean setting:
\begin{enumerate}
    \item 
    For a net $u_\varepsilon=u_\varepsilon(t,x)$, where $x \in \mathbb{R}^n$ is a variable in the Euclidean space, the definitions of moderateness and negligibility are adapted accordingly from Definition \eqref{cinfm}; i.e.,  the net $(u_\varepsilon)_\varepsilon \in {L^{2}([0,T];H^{m}(\mathbb{R}^{n}))}$  is $L^{2}([0,T];H^{m}(\mathbb{R}^{n}))$-moderate if  there exist $N \in \mathbb{N}_{0}$ and $c>0$ such that
    \begin{equation*}
       \|u_{\varepsilon}\|_{L^{2}([0,T];H^{m}(\mathbb{R}^{n}))} \leq c \varepsilon^{-N},     
    \end{equation*}
    for all $\varepsilon\in(0,1]$ and is $L^{2}([0,T];H^{m}(\mathbb{R}^{n}))$-negligible if for all $q\in\mathbb{N}_{0}$ there exists $c>0$  such that
    \begin{equation*}
    \|u_{\varepsilon}\|_{L^{2}([0,T];H^{m}(\mathbb{R}^{n}))} \leq c \varepsilon^{q},
	\end{equation*}
   for all $\varepsilon\in(0,1]$.
\item The notion of a very weak solution for the Cauchy problem \eqref{heatpdeEuc} can be adapted from Definition \ref{vwkdef} by simply replacing the $L^{2}([0, T] ; \ell^{2}_{s}(\hbar\mathbb{Z}^{n}))$-moderate regularisation by the $L^{2}([0,T];H^{m}(\mathbb{R}^{n}))$-moderate regularisation for the source term $f$ and the net $(u_{\varepsilon})_{\varepsilon}.$
\item The proof of Theorem \ref{Eucheatwelvws} will be similar to the proof of Theorem \ref{ext} except for using the inner product of $L^{2}_{m}(\mathbb{R}^{n})$ instead of $H^{s}(\mathbb{T}_{\hbar}^{n})$ in \eqref{weakenergy}.
\item The notion of the uniqueness of very weak solution for the Cauchy problem \eqref{heatpdeEuc} can also be formulated by making similar modifications to Definition  \ref{uniquedef}.
\item If the coefficients $a,b$ are regular, the very weak solution obtained in Theorem \ref{Eucheatwelvws} recaptures the  classical solution obtained in Theorem  \ref{Eucheatwellpo}   in the limit $L^{2}([0,T];H^{m}(\mathbb{R}^{n}))$ as $\varepsilon\to 0$. More precisely, we have the consistency result similar to Theorem \ref{cnst} with the same modifications as above.
\end{enumerate}
\bibliographystyle{alphaabbr}
\bibliography{discreteFractionalLaplace}

\end{document}